\colorlet{darkblue}{blue!55!black}
\colorlet{darkcyan}{cyan!50!black}
\colorlet{darkgreen}{green!60!black}
\def\eqref#1{\textcolor{darkblue}{(\ref{#1})}}
\let\oldequation\equation
\let\oldendequation\endequation
\renewenvironment{equation}{\linenomathNonumbers\oldequation}{\oldendequation\endlinenomath}
\let\expandafter\oldequationstar\csname equation*\endcsname
\let\expandafter\oldendequationstar\csname endequation*\endcsname
\renewenvironment{equation*}{\linenomathNonumbers\oldequationstar}{\oldendequationstar\endlinenomath}
\let\oldalign\align
\let\oldendalign\endalign
\let\expandafter\oldalignstar\csname align*\endcsname
\let\expandafter\oldendalignstar\csname endalign*\endcsname
\renewenvironment{align*}{\linenomathNonumbers\oldalignstar}{\oldendalignstar\endlinenomath}
\theoremstyle{plain}
\newtheorem{theorem}{Theorem}[section]
\newtheorem{lemma}[theorem]{Lemma}
\newtheorem{corollary}[theorem]{Corollary}
\newtheorem{proposition}[theorem]{Proposition}
\theoremstyle{definition}
\newtheorem{definition}[theorem]{Definition}
\newtheorem{example}[theorem]{Example}
\newtheorem{remark}[theorem]{Remark}
\newtheorem*{ack}{Acknowledgments}
\numberwithin{equation}{section}
\numberwithin{theorem}{section}
\title{Measuring birational derived splinters}
\author[T.~De Deyn]{Timothy De Deyn}
\address{T.~De Deyn,
Max Planck Institute for Mathematics,
Bonn, Germany}
\email{dedeyn@mpim-bonn.mpg.de}
\author[P.~Lank]{Pat Lank}
\address{P.~Lank,
Dipartimento di Matematica “F. Enriques”, Universit\`{a} degli Studi di Milano, Milano, Italy}
\email{plankmathematics@gmail.com}
\author[K.~Manali Rahul]{Kabeer Manali Rahul}
\address{K.~Manali Rahul,
Max Planck Institute for Mathematics,
Bonn, Germany}
\email{kabeermr.maths@gmail.com}
\author[S.~Venkatesh]{Sridhar Venkatesh}
\address{S.~Venkatesh,
Department of Mathematics,
UCLA, 
Los Angeles, CA, U.S.A,}
\email{srivenk@math.ucla.edu}
\date{\today}
\keywords{(birational) derived splinters, measurements, generation for triangulated categories}
\subjclass[2020]{14A30 (primary), 14F08, 14B05, 18G80} 
\begin{document}
    
\begin{abstract}
    This work is concerned with categorical methods for studying singularities. Our focus is on birational derived splinters, which is a notion that extends the definition of rational singularities beyond varieties over fields of characteristic zero. Particularly, we show that an invariant called `level' in the associated derived category measures the failure of these singularities.
\end{abstract}

\maketitle


\section{Introduction}
\label{sec:intro}

Nowadays, it is becoming more and more clear that many geometric properties of a variety are reflected by the behavior of its associated derived category. In fact, many types of singularities fall under such geometric properties. Recall that a variety $X$ is a \textit{splinter} if the natural morphism $\mathcal{O}_X \to f_\ast \mathcal{O}_Y$ splits for all finite surjective morphisms $f\colon Y \to X$. This term was coined in \cite{Sing:1999}, although the idea dates back further. 

An inspiration for related notions is Hochster's \textit{direct summand conjecture} \cite{Hochster:1973}. It asserts that any affine regular scheme is a splinter. Recently, Andr\'{e} proved Hochster's conjecture using perfectoid spaces \cite{Yves:2018}, and shortly after, Bhatt proved a derived variation \cite{Bhatt:2018}. More history on the conjecture can be found in \cite{Hochster:2007}.

The behavior of splinters can vary depending on the characteristic of $X$. In the case $X$ has characteristic zero, being a splinter is equivalent to being a normal scheme, which implies the singularity is a local property.  However, in positive characteristic, the story is different. 
Specifically, Bhatt showed $X/\mathbb{F}_p$ is a splinter if and only if it is a \textit{derived splinter}; that is, the natural morphism $\mathcal{O}_X \to \mathbf{R} f_\ast \mathcal{O}_Y$ splits for every proper surjective morphism $f\colon Y \to X$. It turns out that being a splinter is not a local property in such cases. Additionally, it is worthwhile to note that in characteristic zero, being a derived splinter is the same as having rational singularities \cite{Kovacs:2000,Bhatt:2012}.

Part of Bhatt's intuition was the ansatz that proper morphisms are robust derived analogues of finite morphisms. Motivated from this perspective, we study how the addition of birationality to the proper morphisms influences the behavior of such singularities and how this is reflected by their derived categories. Combining these ideas draws attention to the following notion first introduced by Kov\'{a}cs.

\begin{definition}\label{def:bds}
    A Noetherian scheme $X$ is called a \textit{birational derived splinter} if the natural morphism $\mathcal{O}_X \to \mathbf{R}f_\ast \mathcal{O}_Y$ splits for every proper birational morphism $f\colon Y \to X$.
\end{definition}

These singularities have recently been gaining traction, e.g.\ \cite{Lyu:2022,Ma/McDonald/RG/Schwede:2025}.
Furthermore, they coincide with derived splinters in several cases, including affine regular schemes \cite{Bhatt:2018} or affine quasi-excellent $\mathbb{Q}$-schemes \cite[Proposition 3.5]{Liu:2002} (which we generalize to quasi-excellent $\mathbb{Q}$-schemes, see \Cref{cor:birational_notions_for_quasi_excellent_char_zero}). However, the situation changes drastically for schemes in positive characteristic. In particular, any elliptic curve in positive characteristic is a birational derived splinter, whereas Bhatt showed that it cannot be a derived splinter \cite[Example 2.11]{Bhatt:2012}.

One important aspect of the notions of splinters, derived splinters, and birational derived splinters is that they can be studied in any characteristic: zero, positive, or mixed. This is in contrast to rational singularities which require certain vanishing results to behave well, and these vanishing results need not always hold outside of characteristic zero. Moreover, the splinter singularities are related to various other interesting singularities. In positive characteristic, weakly $F$-regular rings are splinters, whereas splinters are $F$-rational \cite{Smith:1993}. In mixed characteristic, $\operatorname{BCM}$-rational or $+$-rational singularities are birational derived splinters (see \cite{Ma/Schwede:2021} and \cite[Remark 3.9]{Ma/McDonald/RG/Schwede:2025}). Furthermore, $\operatorname{BCM}$-rational or $+$-rational singularities are pseudo-rational \cite{Ma/Schwede:2021}, and pseudo-rational schemes with a canonical sheaf are birational derived splinters \cite[Lemma 3.8]{Ma/McDonald/RG/Schwede:2025}.

\subsection{Measurements}
\label{sec:intro_measurement}

Quantifying singularities is important because this can help understand the failure of the singularity existing. In \cite{Lank/Venkatesh:2025}, the notion of level was used to characterize the failure of being a derived splinter. Motivated by this, we propose a numerical invariant (of the scheme) that measures the failure of being a birational derived splinter. Unlike \textit{loc.\ cit.}, we additionally study how geometric properties influence the behavior of this invariant which is possible due to the birationality of the morphisms defining birational derived splinters.

Our techniques use a notion of generation in a triangulated category $\mathcal{T}$ introduced in \cite{Bondal/VandenBergh:2003} and its associated invariants from \cite{Avramov/Buchweitz/Iyengar/Miller:2010}. For a subcategory $\mathcal{S}\subseteq \mathcal{T}$, let $\langle \mathcal{S} \rangle_{n+1}$ (for $n \geq 0$) denote the subcategory generated by $\mathcal{S}$ using shifts, direct sums, direct summands, and at most $n$ cones. Given $E \in \mathcal{T}$, the \textit{level} of $E$ with respect to $\mathcal{S}$, denoted $\operatorname{level}^\mathcal{S}(E)$, is the smallest $N$ such that $E \in \langle \mathcal{S} \rangle_N$, or $+\infty$ otherwise. See \Cref{sec:prelim_generation} for details.

To be a birational derived splinter involves splitting conditions on the `natural morphism' between structure sheaves in the derived category. Before defining our numerical invariant, we motivate them with following lemma; it shows the relation between the splitting conditions and generation in the triangulated category.

Recall the bounded derived category of coherent sheaves is denoted by $D^b_{\operatorname{coh}}$.

\begin{lemma}
    \label{lem:thick_one_iff_splitting_naturally}
    Let $f\colon Y \to X$ be a proper surjective morphism of Noetherian schemes. Then the natural morphism $\mathcal{O}_X \to \mathbf{R}f_\ast \mathcal{O}_Y$ splits if, and only if, $\operatorname{level}^{\mathbf{R}f_\ast D^b_{\operatorname{coh}}(Y) } (\mathcal{O}_X) = 1$.
\end{lemma}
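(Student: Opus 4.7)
The plan is to use the following reformulation. Since $\mathbf{R}f_\ast$ commutes with finite direct sums and shifts, the equality $D^b_{\operatorname{coh}}(X) = \langle \mathbf{R}f_\ast D^b_{\operatorname{coh}}(Y) \rangle_1$ is equivalent to the assertion that every $E \in D^b_{\operatorname{coh}}(X)$ is a direct summand of $\mathbf{R}f_\ast G$ for some $G \in D^b_{\operatorname{coh}}(Y)$.

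For the ``if'' direction, I would apply this reformulation to $E = \mathcal{O}_X$, producing morphisms $i \colon \mathcal{O}_X \to \mathbf{R}f_\ast G$ and $p \colon \mathbf{R}f_\ast G \to \mathcal{O}_X$ with $p \circ i = \mathrm{id}$. The $(\mathbf{L}f^\ast, \mathbf{R}f_\ast)$-adjunction, together with the identity $\mathbf{L}f^\ast \mathcal{O}_X = \mathcal{O}_Y$, forces $i$ to factor through the unit $\eta \colon \mathcal{O}_X \to \mathbf{R}f_\ast \mathcal{O}_Y$ as $i = \mathbf{R}f_\ast(\phi) \circ \eta$ for some $\phi \colon \mathcal{O}_Y \to G$; then $p \circ \mathbf{R}f_\ast(\phi)$ is a retraction of $\eta$.

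For the ``only if'' direction, let $s \colon \mathbf{R}f_\ast \mathcal{O}_Y \to \mathcal{O}_X$ split $\eta$, and fix $E \in D^b_{\operatorname{coh}}(X)$. Tensoring $s \circ \eta = \mathrm{id}$ over $\mathcal{O}_X$ with $E$ exhibits $E$ as a direct summand of $E \otimes^{\mathbf{L}} \mathbf{R}f_\ast \mathcal{O}_Y$, and the projection formula (valid for the proper, hence qcqs, morphism $f$) identifies this with $\mathbf{R}f_\ast \mathbf{L}f^\ast E$. The natural candidate would be $G = \mathbf{L}f^\ast E$, but the main obstacle is that this need not be bounded below---it lies only in $D^{-}_{\operatorname{coh}}(Y)$, since $f$ is not assumed to have finite Tor-dimension. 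To clear this obstacle, I would truncate. Since $f$ is proper and $Y$ is Noetherian, $\mathbf{R}f_\ast$ has finite cohomological dimension, say $\mathbf{R}f_\ast(D^{<n}(Y)) \subseteq D^{<n+d}(X)$ for some $d \geq 0$. Letting $m$ be the infimum of the cohomological support of $E$ and setting $G \colonequals \tau^{\geq m-d} \mathbf{L}f^\ast E \in D^b_{\operatorname{coh}}(Y)$, the first term in the distinguished triangle
\[
\mathbf{R}f_\ast \tau^{<m-d} \mathbf{L}f^\ast E \to \mathbf{R}f_\ast \mathbf{L}f^\ast E \xrightarrow{\beta} \mathbf{R}f_\ast G \xrightarrow{+1}
\]
lies in $D^{<m}(X)$, so Hom-orthogonality in the standard $t$-structure ($E \in D^{\geq m}$) makes the retraction $\mathbf{R}f_\ast \mathbf{L}f^\ast E \to E$ factor through $\beta$. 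Composing the section $E \to \mathbf{R}f_\ast \mathbf{L}f^\ast E$ with $\beta$ then exhibits $E$ as a summand of $\mathbf{R}f_\ast G$, completing the argument.
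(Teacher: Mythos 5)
Your proof is correct and follows essentially the same route as the paper: tensor the splitting with $E$, apply the projection formula, and then resolve the unboundedness of $\mathbf{L}f^\ast E$; the paper delegates the latter reduction to \cite[Lemma 3.8]{Dey/Lank:2024} and the converse adjunction step to \cite[Lemma 5.15]{DeDeyn/Lank/ManaliRahul:2024b}, whereas you unpack both explicitly (the truncation using finite cohomological dimension of $\mathbf{R}f_\ast$ and $t$-structure orthogonality, and the factorization $i=\mathbf{R}f_\ast(\phi)\circ\eta$ from the $(\mathbf{L}f^\ast,\mathbf{R}f_\ast)$-adjunction).
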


\begin{proof}
    The natural morphism $\mathcal{O}_X \to \mathbf{R}f_\ast \mathcal{O}_Y$ splitting implies the claim on level, whereas the converse direction follows via adjunction. This can be seen using e.g.\ use \cite[Lemma 5.15]{DeDeyn/Lank/ManaliRahul:2024b} but we spell out a proof. Indeed, the hypothesis tells us $\mathcal{O}_X$ is a direct summand of a bounded complex of the form $\oplus_{n \in \mathbb{Z}} \mathbf{R}f_\ast E_n^{\oplus r_n} [n]$ where the coproduct is finite and $E_n\in D^b_{\operatorname{coh}}(Y)$. We know that $\oplus_{n \in \mathbb{Z}} \mathbf{R}f_\ast E_n^{\oplus r_n} [n] \cong \mathbf{R}f_\ast (\oplus_{n \in \mathbb{Z}} E_n^{\oplus r_n} [n])$. Hence, it follows that there is a morphism $\mathcal{O}_X \to \mathbf{R}f_\ast (\oplus_{n \in \mathbb{Z}} E_n^{\oplus r_n} [n])$ which has a left inverse. However, using the push/pull adjunction, this morphism factors through the natural morphism $\mathcal{O}_X \to \mathbf{R}f_\ast \mathcal{O}_Y$ (which is just the unit of the adjunction). Consequently, we found the desired splitting.
\end{proof}

Now, the stage has been set.

\begin{definition}
    Let $X$ be a Noetherian scheme.
    Define $\mu_{\operatorname{bds}}(X)$ to be the supremum over the levels of $\mathcal{O}_X$ with respect to $\mathbf{R}f_\ast D^b_{\operatorname{coh}}(Y)$ for every  proper and birational morphism $f\colon Y \to X$. 
    If $X=\operatorname{Spec}(R)$ we also write $\mu_{\operatorname{bds}}(R)$ instead of $\mu_{\operatorname{bds}}(X)$. In symbols,
    \begin{displaymath}
        \mu_{\operatorname{bds}}(X) := \sup\left\{ \operatorname{level}^{\mathbf{R}f_\ast D^b_{\operatorname{coh}}(Y) } (\mathcal{O}_X) \mid f\colon Y \to X \textrm{ proper and birational} \right\}.
    \end{displaymath}
\end{definition}

Philosophically, $\mu_{\operatorname{bds}}$ measures the failure of $X$ to be a birational derived splinter. In particular, this singularity occurs precisely when $\mu_{\operatorname{bds}}(X)=1$. Moreover, we show that $\mu_{\operatorname{bds}}(X)$ can be characterized by blowups along nonzero ideal sheaves (see \Cref{prop:ds_by_alteration}). 

First, we focus on studying $\mu_{\operatorname{bds}}(X)$ in the presence of a resolution (of singularities) $f\colon \widetilde{X} \to X$. In this setting, $\mu_{\operatorname{bds}}(X)$ is always finite and is independent of the chosen $f$ (see \Cref{thm:r_value_for_modification_finite}). Furthermore, $\mu_{\operatorname{bds}}$ is invariant for projective bundles (see \Cref{cor:bds_bundles}). 

Next, we consider the affine case; again in the setting where resolution of singularities exist. For $(R,\mathfrak{m})$ a quasi-excellent normal local ring with $\mathfrak{m}$-adic completion $\widehat{R}$, we have $\mu_{\operatorname{bds}}(R)\geq \mu_{\operatorname{bds}}(\widehat{R})$ (see \Cref{cor:completions_bds}). Moreover, if $X$ is an integral normal affine scheme, we show (see \Cref{prop:measurement_for_affine_local_to_global})
\begin{displaymath}
    \mu_{\operatorname{bds}}(X) = \sup_{p \in X} \{ \mu_{\operatorname{bds}}(\mathcal{O}_{X,p}) \}.
\end{displaymath}

\subsection{Some consequences}
\label{sec:intro_consequences}

Throughout the subsection, we work with schemes of finite type over a field $k$. Note that the results of this section require the existence of a resolution of singularities. In positive characteristic, it is not immediately clear whether the birational derived splinter property is preserved or reflected by fiber products or base change. Key tools commonly used in characteristic zero, such as Kov\'{a}cs' splitting criteria \cite{Kovacs:2000} or Grauert--Riemenschneider vanishing, are generally not available. This complicates studying these singularities in positive characteristic geometry.

Yet, in the affine setting, birational derived splinters behave well with respect to geometric operations, e.g.\ they are stable under localizations, direct limits, and base change along \'{e}tale extensions (see \cite{Lyu:2022}). Furthermore, when the scheme is Cohen--Macaulay, being a birational derived splinter coincides with being pseudorational \cite{Lipman/Teissier:1981}, showing that in this case they are stalk local (see \Cref{lem:pseudorational_bds_same}). However, being a birational derived splinter need not imply Cohen--Macaulayness, see \cite[Example 8.6]{Ma/Polstra:2025} and \cite[Remark 3.10]{Ma/McDonald/RG/Schwede:2025}. 
This makes studying birational derived splinters in the global setting a bit more complicated in the absence of Cohen--Macaulayness. 
We show that $\mu_{\operatorname{bds}}$, and hence generation for triangulated categories, can help clarify the picture. To start, we prove the following inequalities that describe the behavior of $\mu_{\operatorname{bds}}$ under these operations.

\begin{theorem}
    [see \Cref{prop:independence_field,prop:submultiplicativity}]
    \label{introthm}
    Let $k$ be perfect. Consider proper $k$-schemes $Y_1$ and $Y_2$ which admit resolutions of singularities. Then
    \begin{enumerate}
        \item $\mu_{\operatorname{bds}}(Y_1) \geq \mu_{\operatorname{bds}}(Y_1\times_k \operatorname{Spec}(L))$ for every field extension $L/k$,
        \item $\max\{\mu_{\operatorname{bds}}(Y_1),\mu_{\operatorname{bds}}(Y_2)\}\leq \mu_{\operatorname{bds}}(Y_1\times_k Y_2) \leq \mu_{\operatorname{bds}}(Y_1) \mu_{\operatorname{bds}}(Y_2)$.
    \end{enumerate}
\end{theorem}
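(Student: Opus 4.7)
Since $k$ is perfect and each $Y_i$ admits a resolution $\pi_i\colon \widetilde Y_i\to Y_i$, the base change $\pi_{1,L}\colon \widetilde Y_1\times_k L\to Y_{1,L}$ and the external product $\pi_1\times \pi_2\colon \widetilde Y_1\times_k\widetilde Y_2\to Y_1\times_k Y_2$ are again resolutions: smoothness survives both operations and each map is an isomorphism over a dense open. By \Cref{thm:r_value_for_modification_finite} the invariant on the target is then computed via this single resolution in each case, reducing everything to statements about levels with respect to the pushforward of $D^b_{\operatorname{coh}}$ of the source.

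\textbf{Plan for (1).} The plan is to prove the per-morphism strengthening that for every proper birational $f_1\colon Y_1'\to Y_1$ one has $\operatorname{level}^{\mathbf{R}(f_1)_\ast D^b_{\operatorname{coh}}(Y_1')}(\mathcal O_{Y_1})\leq \operatorname{level}^{\mathbf{R}(f_{1,L})_\ast D^b_{\operatorname{coh}}(Y_{1,L}')}(\mathcal O_{Y_{1,L}})$; taking suprema then yields $\mu_{\operatorname{bds}}(Y_1)\leq \mu_{\operatorname{bds}}(Y_{1,L})$. Let $q\colon Y_{1,L}\to Y_1$ and $q'\colon Y_{1,L}'\to Y_1'$ denote the faithfully flat, affine base change projections. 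The key inputs are (a) flat base change $\mathbf{R}q_\ast\mathbf{R}(f_{1,L})_\ast\simeq \mathbf{R}(f_1)_\ast \mathbf{R}q'_\ast$, and (b) the fact that any $k$-linear retraction $L\to k$ (which exists since $L$ is free over $k$) realises $\mathcal O_{Y_1}$ as a direct summand of $q_\ast\mathcal O_{Y_{1,L}}$. Pushing a length-$N$ expression down along $\mathbf{R}q_\ast$ and then extracting the summand $\mathcal O_{Y_1}$ places it in the thick subcategory generated by $\mathbf{R}(f_1)_\ast \mathbf{R}q'_\ast D^b_{\operatorname{coh}}(Y_{1,L}')$ at level at most $N$.

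\textbf{Plan for (2).} The upper bound uses $\mathcal O_{Y_1\times_k Y_2}=\mathcal O_{Y_1}\boxtimes \mathcal O_{Y_2}$ together with the K\"unneth identity $\mathbf{R}(\pi_1\times\pi_2)_\ast(F_1\boxtimes F_2)\simeq \mathbf{R}(\pi_1)_\ast F_1\boxtimes \mathbf{R}(\pi_2)_\ast F_2$ and the standard multiplicativity of level under external tensor products. For the lower bound, fix $f_1\colon Y_1'\to Y_1$ proper birational, set $g=f_1\times \pi_2\colon Y_1'\times_k \widetilde Y_2\to Y_1\times_k Y_2$, and pull back via the flat inclusion $j\colon Y_1\times_k L\hookrightarrow Y_1\times_k Y_2$ of the generic fibre of the second projection, where $L$ is the function field of a component of $Y_2$. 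Since $\pi_2$ is birational, $\widetilde Y_2\times_{Y_2}\operatorname{Spec}L\simeq \operatorname{Spec}L$, and flat base change identifies $j^\ast\mathbf{R}g_\ast D^b_{\operatorname{coh}}$ with $\mathbf{R}(f_{1,L})_\ast D^b_{\operatorname{coh}}(Y_{1,L}')$, yielding $\operatorname{level}^{\mathbf{R}(f_{1,L})_\ast D^b_{\operatorname{coh}}}(\mathcal O_{Y_{1,L}})\leq \mu_{\operatorname{bds}}(Y_1\times_k Y_2)$. The per-morphism inequality from the plan for (1) then bounds $\operatorname{level}^{\mathbf{R}(f_1)_\ast D^b_{\operatorname{coh}}}(\mathcal O_{Y_1})$ by the same quantity; a supremum over $f_1$ concludes, and by symmetry the analogous bound also holds for $Y_2$.

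\textbf{Main obstacle.} The hard part is the descent in (1): the expression produced for $\mathcal O_{Y_1}$ a priori lives in $D_{\operatorname{QCoh}}(Y_1)$ with generators from $\mathbf{R}(f_1)_\ast \mathbf{R}q'_\ast D^b_{\operatorname{coh}}(Y_{1,L}')$, which is strictly larger than $\mathbf{R}(f_1)_\ast D^b_{\operatorname{coh}}(Y_1')$ because $q'$ is not finite. Ensuring that the level bound survives the restriction back to $D^b_{\operatorname{coh}}(Y_1')$, via compactness/dualizability of $\mathcal O_{Y_1}$ or a filtered-colimit analysis of $\mathbf{R}q'_\ast$, is the main technical point; the rest of the argument reduces to standard K\"unneth, flat base change, and the reduction to a single resolution from \Cref{thm:r_value_for_modification_finite}.
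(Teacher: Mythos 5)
Your proposal targets the inequality in (1) as literally written in the introduction, but note that the cited \Cref{prop:independence_field} in the body of the paper proves the \emph{opposite} direction, $\mu_{\operatorname{bds}}(Y_1)\geq\mu_{\operatorname{bds}}(Y_1\times_k\operatorname{Spec}(L))$. The paper's argument is just a flat \emph{pullback}: $q^\ast$ along the affine flat projection $q\colon Y_{1,L}\to Y_1$ is level-nonincreasing, flat base change places $q^\ast\mathbf{R}f_\ast D^b_{\operatorname{coh}}(\widetilde{Y}_1)$ inside $\mathbf{R}(f_L)_\ast D^b_{\operatorname{coh}}(\widetilde{Y}_{1,L})$, and \Cref{thm:r_value_for_modification_finite} finishes. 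Your pushforward-and-extract-summand route would be the right tool for the direction you are attacking, but the ``main obstacle'' you flag is in fact fatal when $L/k$ is infinite: $q'_\ast$ does not land in $D^b_{\operatorname{coh}}(Y'_1)$, and neither compactness of $\mathcal O_{Y_1}$ nor a filtered-colimit decomposition of $q'_\ast$ recovers a \emph{coherent} level bound, since level does not commute with filtered colimits and the thick subcategory of $D_{\operatorname{qc}}(Y'_1)$ generated by $D^b_{\operatorname{coh}}(Y'_1)$ is already $D^b_{\operatorname{coh}}(Y'_1)$. Your method does succeed when $L/k$ is finite, and that is exactly how the paper proves the equality $\mu_{\operatorname{bds}}(X)=\mu_{\operatorname{bds}}(X_L)$ in the finite case.

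For (2), the lower bound in your plan is routed through the unproved descent of (1) via a generic fibre, and so inherits the same gap. The paper's \Cref{prop:submultiplicativity} is both more direct and more general (no perfectness or resolution hypotheses): since $Y_2$ is proper over $k$, the projection $\pi'_1\colon Y'_1\times_k Y_2\to Y'_1$ is proper and hence preserves $D^b_{\operatorname{coh}}$, and flat base change identifies $\mathbf{R}\pi_{1\ast}\mathcal O_{Y_1\times_k Y_2}$ with a finite direct sum of shifts of $\mathcal O_{Y_1}$; one then pushes the generation data down along $\pi'_1$ and applies \Cref{lem:splitting_lemma_compositions}-type compatibilities, with no detour through (1). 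Your upper bound via a K\"unneth isomorphism and ``multiplicativity of level under $\boxtimes$'' is a genuinely different route from the paper's two-step tower $X\times_k Y\to\widetilde{X}\times_k Y\to\widetilde{X}\times_k\widetilde{Y}$ (which combines \Cref{prop:subadditive} with \Cref{lem:characterize_by_perfects}); the K\"unneth path could plausibly give the same bound, but the multiplicativity statement you are invoking is not in the paper's toolkit and would need to be established rather than treated as standard.
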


Thus, we have some clarity:

\begin{corollary}
    [see \Cref{thm:base_change_bds_field} and \Cref{cor:fibered_product_bds}]
    \label{cor:intro_field_extensions_fiber_product}
    Let $k$ be perfect. Consider proper $k$-schemes $Y_1$ and $Y_2$ which admit resolutions of singularities. Then
    \begin{enumerate}
        \item \label{cor:intro_field_extensions_fiber_product1} $Y_1$ is a birational derived splinter if, and only if, $Y_1\times_k \operatorname{Spec}(\overline{k})$ is where $\overline{k}$ is an algebraic closure of $k$,
        \item \label{cor:intro_field_extensions_fiber_product2} $Y_1$ and $Y_2$ are birational derived splinters if, and only if, $Y_1\times_k Y_2$ is such.
    \end{enumerate}
\end{corollary}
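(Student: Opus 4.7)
The plan is to derive both equivalences directly from \Cref{introthm}, exploiting that a Noetherian scheme $X$ is a birational derived splinter exactly when $\mu_{\operatorname{bds}}(X)=1$, together with the fact (implicit in \Cref{thm:r_value_for_modification_finite}) that, whenever a resolution of singularities exists, $\mu_{\operatorname{bds}}(X)$ is already computed by any single resolution.

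Part (2) is a purely formal consequence of the sandwich inequality in \Cref{introthm}(2). If both $Y_i$ are birational derived splinters, then $\mu_{\operatorname{bds}}(Y_1)=\mu_{\operatorname{bds}}(Y_2)=1$ and the upper bound forces $\mu_{\operatorname{bds}}(Y_1\times_k Y_2)\le 1$, hence equals $1$. Conversely, if $Y_1\times_k Y_2$ is a birational derived splinter, the lower bound $\max\{\mu_{\operatorname{bds}}(Y_1),\mu_{\operatorname{bds}}(Y_2)\}\le 1$ forces both factors to be birational derived splinters. No further input is required.

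For part (1), the reverse direction (``base change is bds implies $Y_1$ is bds'') is immediate from the inequality in \Cref{introthm}(1). The forward direction does not follow from an inequality, and I would argue it directly. Fix a resolution $\pi\colon\widetilde{Y_1}\to Y_1$; since $k$ is perfect, regularity is preserved under the base change $k\to\overline{k}$, so $\pi_{\overline{k}}\colon \widetilde{Y_1}\times_k\overline{k}\to Y_1\times_k\overline{k}$ is again a (proper birational) resolution. Writing $p\colon Y_1\times_k\overline{k}\to Y_1$ for the faithfully flat projection, flat base change gives
\begin{displaymath}
    p^\ast \mathbf{R}\pi_\ast\mathcal{O}_{\widetilde{Y_1}} \;\cong\; \mathbf{R}(\pi_{\overline{k}})_\ast\mathcal{O}_{\widetilde{Y_1}\times_k\overline{k}},
\end{displaymath}
so pulling back the splitting of $\mathcal{O}_{Y_1}\to\mathbf{R}\pi_\ast\mathcal{O}_{\widetilde{Y_1}}$ supplied by $Y_1$ being a birational derived splinter produces a splitting of $\mathcal{O}_{Y_1\times_k\overline{k}}\to\mathbf{R}(\pi_{\overline{k}})_\ast\mathcal{O}_{\widetilde{Y_1}\times_k\overline{k}}$. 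Invoking \Cref{thm:r_value_for_modification_finite} then promotes this one-witness splitting to the full birational derived splinter property for $Y_1\times_k\overline{k}$.

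The main obstacle is precisely this converse direction of (1): since \Cref{introthm}(1) is only a one-sided inequality, a purely level-theoretic argument is unavailable, and one must genuinely transport the splitting itself across the base change. Perfectness of $k$ enters to ensure that the base change of a resolution remains a resolution, while the existence of a resolution is used so that testing the bds property against a single witness is enough; both hypotheses are therefore essential to the argument at exactly this step.
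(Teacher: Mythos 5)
Your part (2) is correct and is exactly the paper's argument: deduce \Cref{cor:fibered_product_bds} from the sandwich inequality of \Cref{thm:submultiplicativity}. In part (1), your ``direct'' argument for the ascending implication ($Y_1$ bds $\Rightarrow$ $Y_1\times_k\overline{k}$ bds) --- base change the resolution using perfectness, pull back the splitting by flat base change, conclude via \Cref{thm:Murayama_criterion} or \Cref{thm:r_value_for_modification_finite} --- is correct; it is essentially the $n=1$ case of the proof of \Cref{prop:independence_field}. However, you have been misled by a misprint. As stated, \Cref{introthm}(1) carries a $\leq$, but the cited \Cref{prop:independence_field} proves $\mu_{\operatorname{bds}}(Y_1)\geq\mu_{\operatorname{bds}}(Y_1\times_k\operatorname{Spec}(L))$. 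Read correctly, that inequality yields the ascending implication (the one you already proved by hand), not the descending one, so the step where you dispose of ``$Y_1\times_k\overline{k}$ bds $\Rightarrow$ $Y_1$ bds'' by citing \Cref{introthm}(1) is a genuine gap.

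The descending implication needs a different mechanism, supplied in the paper by \Cref{prop:field_base_change_birational_derived_splinter}, which notably requires neither perfectness of $k$ nor a resolution. Sketch: the projection $p\colon Y_1\times_k\operatorname{Spec}(L)\to Y_1$ is flat and affine, so by flat base change $p_\ast\mathcal{O}_{Y_1\times_k\operatorname{Spec}(L)}$ is a possibly infinite direct sum of copies of $\mathcal{O}_{Y_1}$, whence the unit $\mathcal{O}_{Y_1}\to p_\ast\mathcal{O}_{Y_1\times_k\operatorname{Spec}(L)}$ splits. Given a proper birational $f\colon Y\to Y_1$, base change it along $p$, invoke the bds property of $Y_1\times_k\operatorname{Spec}(L)$ to split the corresponding unit, push that splitting forward along $p$, and compose to obtain a splitting of $\mathcal{O}_{Y_1}\to\mathbf{R}f_\ast\mathcal{O}_Y$. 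The key difference from your argument is that the splitting is transported along a pushforward rather than a pullback; this is the piece missing from your proposal.
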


To the best of our knowledge, these results are new in positive characteristic and reflects a few related results for rational singularities in characteristic zero. However, the techniques to prove these results differ from those in characteristic zero. Moreover, we do not know whether the converse direction of \Cref{cor:intro_field_extensions_fiber_product}\eqref{cor:intro_field_extensions_fiber_product1} (i.e.\ descending singularities)  has appeared before in the literature for rational singularities (partly due to the possibility of needing to deal with infinite direct sums). 

In summary, our work uses categorical techniques to study singularities. We believe that the results presented here, together with those in \cite{Lank/Venkatesh:2025}, represent strong strides towards understanding singularities through derived categorical methods.
Additionally, to make this work accessible to those from a different background, e.g.\ those not familiar with generation for triangulated categories or certain geometric techniques, we have spelled out the proofs with sufficient details.

\subsection*{Notation}
\label{sec:intro_notation}

Let $X$ be a Noetherian scheme. We work with the following triangulated categories: $D(X):=D(\operatorname{Mod}(X))$ the derived category of $\mathcal{O}_X$-modules; $D_{\operatorname{qc}}(X)$ the (strictly full) subcategory of $D(X)$ consisting of complexes with quasi-coherent cohomology; $D_{\operatorname{coh}}^b(X)$ the (strictly full) subcategory of $D(X)$ consisting of complexes having bounded and coherent cohomology; and $\operatorname{Perf}(X)$ the (strictly full) subcategory of $D_{\operatorname{qc}}(X)$ consisting of the perfect complexes on $X$.
At times when $X$ is affine, we abuse notation and write $D(R):=D_{\operatorname{qc}}(X)$ where $R:=H^0(X,\mathcal{O}_X)$ are the global sections; similar abuse holds for other categories. 
Furthermore, given a morphism $f\colon Y \to X$, we write $\mathcal{O}_X \xrightarrow{ntrl.} \mathbf{R}f_\ast \mathcal{O}_Y$ for the natural morphism, i.e.\ the unit of the derived pullback/pushforward adjunction.

\begin{ack}
    De Deyn was supported by ERC Consolidator Grant 101001227 (MMiMMa).
    Lank was supported under the ERC Advanced Grant 101095900-TriCatApp, partly while visiting University of Glasgow and the Basque Center for Applied Mathematics (BCAM), and thanks S\'{a}ndor Kov\'{a}cs and Karl Schwede for comments on an earlier version of this work. The authors thank the referee for helpful suggestions and a careful reading.
\end{ack}

\section{Preliminaries}
\label{sec:prelim}

\subsection{Generation}
\label{sec:prelim_generation}

We use notions of generation in triangulated categories as introduced in \cite{Bondal/VandenBergh:2003, Avramov/Buchweitz/Iyengar/Miller:2010}. The reader is encouraged to refer to loc.\ cit.\ for more details. Let $\mathcal{T}$ be a triangulated category. 
A functor $F\colon \mathcal{T}\to \mathcal{T}^\prime$ between triangulated categories is called \textbf{essentially surjective} (resp.\ \textbf{dense}) if every object $T^\prime \in \mathcal{T}$ is isomorphic to (resp.\ a retract of) $F(T)$ where $T\in \mathcal{T}$. A \textbf{strictly full} subcategory is a full subcategory closed under isomorphisms. We say that a subcategory $\mathcal{S}$ of $\mathcal{T}$ is \textbf{thick} if it is a triangulated subcategory of $\mathcal{T}$ closed under direct summands.
Generally, subcategories are not thick (or triangulated).
For any collection of objects $S$, let $\operatorname{add}(S)$ be the smallest strictly full subcategory of $\mathcal{T}$ containing $S$ that is closed under shifts, finite coproducts and direct summands. 
Inductively define
\begin{displaymath}
    \langle S\rangle_n :=
    \begin{cases}
        \operatorname{add}(0) & n=0, \\
        \operatorname{add}(S) & n=1, \\
        \operatorname{add}(\{ \operatorname{cone}\phi \mid \phi \in \operatorname{Hom}(\langle \mathcal{S} \rangle_{n-1}, \langle \mathcal{S} \rangle_1) \}) & n>1.
    \end{cases}
\end{displaymath}
An important feature of this construction is that $\langle S \rangle = \cup_{n\geq 0} \langle  S \rangle_n$, where $\langle S \rangle$ is the smallest thick subcategory of $\mathcal{T}$ that contains $S$.  We say an object $E\in \mathcal{T}$ is \textbf{finitely built by $\mathcal{S}$} if $E\in \langle \mathcal{S} \rangle$. 
The smallest $n$ for which $E\in \langle \mathcal{S} \rangle_n$ is called the \textbf{level of $E$ with respect to $\mathcal{S}$}. This value is denoted by $\operatorname{level}^{\mathcal{S}}(E)$ (otherwise, set to $\infty$).

\subsection{Types of morphisms}
\label{sec:prelim_singularities_morphisms}

Let $f\colon Y \to X$ be a morphism of integral Noetherian schemes. We say $f$ is a \textbf{modification} if it is proper and birational, see e.g.\ \cite[\href{https://stacks.math.columbia.edu/tag/01RN}{Tag 01RN}]{StacksProject} for some characterizations. Also, $f$ is an \textbf{alteration} if it is proper, dominant and generically finite, see e.g.\ \cite[\href{https://stacks.math.columbia.edu/tag/02NV}{Tag 02NV}]{StacksProject} for some characterizations.
Observe that each morphism above is surjective because each morphism is proper and dominant. Indeed, it follows from that fact each map on underlying topological spaces is closed and its image contains the generic point.
By \cite[\href{https://stacks.math.columbia.edu/tag/080B}{Tag 080B}]{StacksProject}, a sequence of blowups of Noetherian schemes is a (single) blowup. As a convention, we say a modification is projective if the morphism is such, and analogously for other adjectives. A useful source of modifications are blowups of integral Noetherian schemes along nonzero ideal sheaves (see e.g.\ \cite[\href{https://stacks.math.columbia.edu/tag/02ND}{Tag 02ND} \& \href{https://stacks.math.columbia.edu/tag/02OS}{Tag 02OS}]{StacksProject}). 

Now, for the general setting of Noetherian schemes which need not be integral, we follow \cite[\href{https://stacks.math.columbia.edu/tag/01RN}{Tag 01RN}]{StacksProject} for the definition of birationality. A fact we use liberally is that blowups are compatible with flat base change (see e.g.\ \cite[\href{https://stacks.math.columbia.edu/tag/0805}{Tag 0805}]{StacksProject})and that birationality of a morphism base changes along flat morphisms (see e.g.\ \cite[\S 1, Proposition 3.9.9]{Grothendieck/Dieudonne:1971}). Moreover, we say a \textbf{resolution of singularities} is a proper birational morphism from a regular scheme. 

\subsection{Birational derived splinters}
\label{sec:birational_derived_splinters}

We discuss a notion of singularities first introduced by Kov\'{a}cs. Recall a Noetherian scheme $X$ is called a \textbf{birational derived splinter} if $\mathcal{O}_X \xrightarrow{ntrl.} \mathbf{R}f_\ast \mathcal{O}_Y$ splits for every proper birational morphism $f\colon Y \to X$.
For example, any quasi-compact regular scheme is a birational derived splinter (use \Cref{lem:LV25}). We record a few useful observations.

\begin{lemma}
    \label{lem:splitting_lemma_compositions}
    Let $f\colon Y \to X$ and $g\colon Z \to Y$ be morphisms of Noetherian schemes. If $\mathcal{O}_X \xrightarrow{ntrl.} \mathbf{R} (f \circ g)_\ast \mathcal{O}_Z$ splits, then $\mathcal{O}_X \xrightarrow{ntrl.} \mathbf{R} f_\ast \mathcal{O}_Y$ splits.
\end{lemma}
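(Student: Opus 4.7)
The plan is to exploit the functoriality of the unit of adjunction and the fact that $\mathbf{R}(f\circ g)_\ast \cong \mathbf{R}f_\ast \circ \mathbf{R}g_\ast$. Concretely, I want to show that the natural morphism $\mathcal{O}_X \xrightarrow{ntrl.} \mathbf{R}(f\circ g)_\ast \mathcal{O}_Z$ factors as
\begin{displaymath}
    \mathcal{O}_X \xrightarrow{ntrl.} \mathbf{R}f_\ast \mathcal{O}_Y \xrightarrow{\mathbf{R}f_\ast(ntrl.)} \mathbf{R}f_\ast \mathbf{R}g_\ast \mathcal{O}_Z \cong \mathbf{R}(f\circ g)_\ast \mathcal{O}_Z,
\end{displaymath}
where the first arrow is the unit for $f$ and the second is $\mathbf{R}f_\ast$ applied to the unit for $g$. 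This is a standard identity for units of composed adjunctions, but I would verify it by unwinding the triangle identities for the $(\mathbf{L}f^\ast, \mathbf{R}f_\ast)$ and $(\mathbf{L}g^\ast, \mathbf{R}g_\ast)$ adjunctions.

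Granting this factorization, the argument becomes formal. Write $u\colon \mathcal{O}_X \to \mathbf{R}f_\ast \mathcal{O}_Y$ and $v\colon \mathbf{R}f_\ast \mathcal{O}_Y \to \mathbf{R}(f\circ g)_\ast \mathcal{O}_Z$ for the two maps above, so that $v\circ u$ is the natural morphism for $f\circ g$. By hypothesis there is a retraction $s\colon \mathbf{R}(f\circ g)_\ast \mathcal{O}_Z \to \mathcal{O}_X$ in $D(X)$ with $s\circ v\circ u = \mathrm{id}_{\mathcal{O}_X}$. Then $s\circ v$ is a retraction of $u$, which gives the desired splitting of $\mathcal{O}_X \xrightarrow{ntrl.} \mathbf{R}f_\ast \mathcal{O}_Y$.

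The only nontrivial point is the factorization of the unit, and this is truly routine (and likely already recorded somewhere in the Stacks project or in the references on derived adjunctions cited elsewhere in the paper). Everything else is a one-line diagram chase, so I do not anticipate any genuine obstacle.
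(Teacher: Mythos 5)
Your proof is correct and takes essentially the same route as the paper: the paper's one-line proof observes that the natural map for $f\circ g$ factors through the natural map for $f$, which is precisely the factorization of the unit of a composed adjunction that you spell out, and the retraction argument you give is the standard consequence.
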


\begin{proof}
    This follows from $\mathcal{O}_X \xrightarrow{ntrl.} \mathbf{R} (f \circ g)_\ast \mathcal{O}_Z$ factoring through $\mathcal{O}_X \xrightarrow{ntrl.} \mathbf{R} f_\ast \mathcal{O}_Y$.
\end{proof}

\begin{proposition}
    \label{prop:birational_derived_splinter_via_blow_ups}
    Let $X$ be an integral Noetherian scheme. Then $X$ is a birational derived splinter if, and only if, $\mathcal{O}_X \xrightarrow{ntrl.} \mathbf{R}f_\ast \mathcal{O}_{X^\prime}$ splits for every blowup of $X$ along a nonzero ideal sheaf.
\end{proposition}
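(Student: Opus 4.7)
The plan is to prove both implications, with the forward direction being immediate and the converse requiring a domination argument.

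The forward direction is essentially tautological: blowups of integral Noetherian schemes along nonzero ideal sheaves are modifications (proper birational), as recalled in \Cref{sec:prelim_singularities_morphisms}. So if $X$ is a birational derived splinter, then in particular the natural map $\mathcal{O}_X \xrightarrow{ntrl.} \mathbf{R}f_\ast \mathcal{O}_{X'}$ splits for every blowup $f\colon X'\to X$ along a nonzero ideal sheaf.

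For the converse, I would use a domination-by-blowups argument. Given an arbitrary modification $f\colon Y \to X$, the key input is the following standard fact about modifications of integral Noetherian schemes: there exist a nonzero ideal sheaf $\mathcal{I}\subseteq \mathcal{O}_X$ and a morphism $g\colon X' \to Y$, where $\pi\colon X'\to X$ is the blowup of $X$ along $\mathcal{I}$, such that $\pi = f\circ g$. (One can obtain such a $\pi$ via the strict transform construction or as a consequence of Raynaud--Gruson flattening; a convenient reference is \cite[\href{https://stacks.math.columbia.edu/tag/080P}{Tag 080P}]{StacksProject} or the analogous statements around \href{https://stacks.math.columbia.edu/tag/081R}{Tag 081R}.) By hypothesis, $\mathcal{O}_X \xrightarrow{ntrl.} \mathbf{R}\pi_\ast \mathcal{O}_{X'} = \mathbf{R}(f\circ g)_\ast \mathcal{O}_{X'}$ splits, and so \Cref{lem:splitting_lemma_compositions} immediately yields that $\mathcal{O}_X \xrightarrow{ntrl.} \mathbf{R}f_\ast \mathcal{O}_Y$ splits, as desired.

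The only nontrivial step is the domination-by-blowups input, which is the expected obstacle but is a well-documented result; everything else is a formal consequence of \Cref{lem:splitting_lemma_compositions} together with the definition of birational derived splinter.
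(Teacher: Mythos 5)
Your proof is correct and takes essentially the same approach as the paper: both arguments reduce the general modification to a blowup by invoking the classical domination-by-blowups result (the paper cites L\"utkebohmert's Lemma 2.2; you cite the analogous Stacks Project facts) and then conclude via \Cref{lem:splitting_lemma_compositions}.
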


\begin{proof}
    If $X$ is a birational derived splinter, then the desired spitting condition is obvious. So, we check the converse. Using \cite[Lemma 2.2]{Lutkebohmert:1993} and \Cref{lem:splitting_lemma_compositions}, one can reduce to the case of blowups. Indeed, loc.\ cit.\ allows us to dominate any proper birational morphism $Y\to X$ by a blowup $X^\prime \to X$ which factors through $Y\to X$.
\end{proof}

\begin{proposition}
    \label{prop:splitting_birational_derived_splinter}
    Let $f\colon Y \to S$ be a proper surjective morphism of Noetherian schemes such that $\mathcal{O}_S \xrightarrow{ntrl.} \mathbf{R}f_\ast \mathcal{O}_Y$ splits. If $Y$ is a birational derived splinter, then so is $S$.
\end{proposition}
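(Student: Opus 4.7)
Given a proper birational morphism $g\colon T \to S$, the goal is to produce a splitting of $\mathcal{O}_S \xrightarrow{ntrl.} \mathbf{R}g_\ast \mathcal{O}_T$. The plan is to construct a commutative square
\begin{equation*}
\begin{tikzcd}
Y' \arrow[r, "k"] \arrow[d, "h"'] & T \arrow[d, "g"] \\
Y \arrow[r, "f"'] & S
\end{tikzcd}
\end{equation*}
in which $h\colon Y' \to Y$ is itself a modification, and then chase splittings around it.

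Granting such a diagram, the conclusion is essentially formal. Since $Y$ is a birational derived splinter, $\mathcal{O}_Y \xrightarrow{ntrl.} \mathbf{R}h_\ast \mathcal{O}_{Y'}$ splits. Applying $\mathbf{R}f_\ast$ and composing with the hypothesized splitting of $\mathcal{O}_S \xrightarrow{ntrl.} \mathbf{R}f_\ast \mathcal{O}_Y$ yields a splitting of $\mathcal{O}_S \to \mathbf{R}(f\circ h)_\ast \mathcal{O}_{Y'}$. Commutativity of the square gives $f\circ h = g \circ k$, so this composite equals $\mathcal{O}_S \to \mathbf{R}(g\circ k)_\ast \mathcal{O}_{Y'}$. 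Invoking \Cref{lem:splitting_lemma_compositions} on the factorization through $g$ then yields the desired splitting of $\mathcal{O}_S \to \mathbf{R}g_\ast \mathcal{O}_T$.

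For the construction of the square, I first plan to use \cite[Lemma 2.2]{Lutkebohmert:1993} together with \Cref{lem:splitting_lemma_compositions}, in the spirit of the proof of \Cref{prop:birational_derived_splinter_via_blow_ups}, to reduce to the case where $g$ is the blowup of $S$ along a nonzero ideal sheaf $\mathcal{I} \subseteq \mathcal{O}_S$. Then $h\colon Y' \to Y$ is taken to be the blowup of $Y$ along the pullback ideal $f^{-1}\mathcal{I}\cdot \mathcal{O}_Y$. Since this ideal becomes invertible on $Y'$ by construction, the universal property of the blowup $T = \operatorname{Bl}_\mathcal{I}(S)$ produces the requisite $S$-morphism $k\colon Y' \to T$ making the square commute.

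The main obstacle is verifying that $h\colon Y' \to Y$ is birational, equivalently that $f^{-1}\mathcal{I}\cdot \mathcal{O}_Y$ is nonzero at every generic point of $Y$. This is where the surjectivity of $f$ is used crucially: after the reduction step, $\mathcal{I}$ is nonzero at each relevant generic point of $S$, and surjectivity of $f$ forces the pullback ideal to be nonzero at the corresponding generic points of $Y$, so $h$ is an isomorphism over a dense open and hence a modification. Once this technicality is settled, the splitting chain above completes the proof.
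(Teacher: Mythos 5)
Your proof is correct and, after the square is in hand, the splitting chase (apply $\mathbf{R}f_\ast$, compose with the given section, then invoke \Cref{lem:splitting_lemma_compositions}) coincides with the paper's. The genuine difference is in how the square is produced. The paper forms the fiber product $W:=Y\times_S T$ and applies L\"utkebohmert's Lemma~2.2 to the projection $g'\colon W\to Y$, which is proper and an isomorphism over a dense open of $Y$, to obtain a modification $h\colon Z\to Y$ factoring through $g'$; the two legs $Z\to W\to Y$ and $Z\to W\to T$ then give the square for free, and $h$ being a modification is part of what the lemma delivers. You instead invoke L\"utkebohmert once at the outset (via the reduction in \Cref{prop:birational_derived_splinter_via_blow_ups}) to assume $g$ is the blowup along an ideal $\mathcal{I}$, then build the modification of $Y$ by hand as $\operatorname{Bl}_{f^{-1}\mathcal{I}\cdot\mathcal{O}_Y}(Y)$ and obtain $k$ from the universal property of the blowup. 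That is a more concrete, perfectly valid route, with the trade-off that you must then verify birationality of $h$ yourself --- precisely the step you flag --- whereas the paper gets it as output of the lemma. Your density argument via dominance of $f$ is fine in the integral case. Two small cautions: \Cref{prop:birational_derived_splinter_via_blow_ups} is stated for integral $X$, whereas the proposition you are proving allows general Noetherian $S$, so the blowup reduction should be phrased in terms of the ``suitable'' ideal sheaves of \Cref{prop:ds_by_alteration} rather than ``nonzero'' ones; and in the non-integral case the check that $f^{-1}\mathcal{I}\cdot\mathcal{O}_Y$ is the unit ideal at every generic point of $Y$ needs slightly more care than ``surjectivity of $f$,'' since not every generic point of $Y$ need map to a generic point of $S$ (the same subtlety is implicitly present in the paper's appeal to L\"utkebohmert applied to $g'$).
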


\begin{proof}
    This follows by arguing with \cite[Lemma 2.2]{Lutkebohmert:1993} but we add details for interest of techniques. Let $g\colon X \to S$ be a proper birational morphism. Denote by $g^\prime\colon Y\times_S X \to Y$ for the projection morphism. Using \cite[Lemma 2.2]{Lutkebohmert:1993}, we can find a proper birational morphism $h\colon Z \to Y$ which factors through $g^\prime$.
    Indeed, from base change, we see that $g^\prime$ is a proper morphism which is an isomorphism over an open subset of $Y$. 
    If $Y$ is a birational derived splinter, then $\mathcal{O}_Y \xrightarrow{ntrl.} \mathbf{R}h_\ast \mathcal{O}_Z$ splits. Hence, by \Cref{lem:splitting_lemma_compositions}, $\mathcal{O}_Y \xrightarrow{ntrl.} \mathbf{R}g^\prime_\ast \mathcal{O}_{Y\times_S X}$ splits. So, the claim follows from \Cref{lem:splitting_lemma_compositions}.
\end{proof}

The following is very useful for our work.

\begin{lemma}
    [{\cite[Lemma 3.16]{Lank/Venkatesh:2025}}]
    \label{lem:LV25}
    Let $f\colon Y \to X$ be a proper birational morphism to a Noetherian scheme. If $\mathbf{R}f_\ast \mathcal{O}_Y \in \operatorname{Perf}(X)$ (e.g. if $X$ is regular), then the natural morphism $\mathcal{O}_X \to \mathbf{R}f_\ast \mathcal{O}_Y$ splits.
\end{lemma}

\begin{proof}
    This lemma was communicated by Bhatt to Lank and Venkatesh in \cite{Lank/Venkatesh:2025}. 
    However, for completeness sake, we add a proof in the special case where $X$ is reduced; this suffices for our purposes below.
    The proof in full generality requires the use of `derived algebraic geometry' and is out of scope for this note.
    
    Let $U$ denote the dense open locus of $X$ over which $f$ is an isomorphism. 
    Observe that, as we assume $X$ is reduced, this open is scheme-theoretically dense by \cite[\href{https://stacks.math.columbia.edu/tag/056D}{Tag 056D}]{StacksProject}.
    Crucially, it follows that the restriction $\rho\colon \Gamma(X,\mathcal{O}_X)\to \Gamma(U,\mathcal{O}_X)$ is injective.

    Now, as $\mathbf{R}f_\ast \mathcal{O}_Y$ is perfect we have a trace morphism (coming from `evaluation') 
    \begin{displaymath}
        \operatorname{\mathbf{R}\mathcal{E}\! \mathit{nd}}(\mathbf{R}f_\ast \mathcal{O}_Y)\cong \operatorname{\mathbf{R}\mathcal{H}\! \mathit{om}}(\mathbf{R}f_\ast \mathcal{O}_Y, \mathcal{O}_X)\otimes_{\mathcal{O}_X} \mathbf{R}f_\ast \mathcal{O}_Y \to \mathcal{O}_X.
    \end{displaymath}
    Pre-composing with the morphism $\mathcal{O}_X\to \operatorname{\mathbf{R}\mathcal{E}\! \mathit{nd}}(\mathbf{R}f_\ast \mathcal{O}_Y)$, obtained from the $\mathcal{O}_X$-linear structure on the Hom-sets, we obtain an endomorphism of $\mathcal{O}_X$ which we can view as a global section $s\in \Gamma(X,\mathcal{O}_X)$.
    
    Note that the morphism $\mathcal{O}_X\to \operatorname{\mathbf{R}\mathcal{E}\! \mathit{nd}}(\mathbf{R}f_\ast \mathcal{O}_Y)$ factors through the natural morphism $\mathcal{O}_X \to \mathbf{R}f_\ast \mathcal{O}_Y$; the morphism $\mathbf{R}f_\ast \mathcal{O}_Y\to \operatorname{\mathbf{R}\mathcal{E}\! \mathit{nd}}(\mathbf{R}f_\ast \mathcal{O}_Y)$ coming from the algebra structure on the former.
    Hence is suffices to show $s=1$, which follows if we can show $\rho(s)=1$.
    By choice of $U$
    \begin{displaymath}
        \operatorname{\mathbf{R}\mathcal{E}\! \mathit{nd}}(\mathbf{R}f_\ast \mathcal{O}_Y)|_U\cong \mathcal{O}_U.
    \end{displaymath}
    As this isomorphism is compatible with the trace morphisms and the $\mathcal{O}_X$-linear structure, it follows that $\rho(s)=1$, finishing the proof.
    \end{proof}

\begin{theorem}
    \label{thm:Murayama_criterion}
    Let $X$ be a Noetherian scheme that admits a resolution of singularities $f\colon \widetilde{X} \to X$. Then the following are equivalent:
    \begin{enumerate}
        \item \label{thm:Murayama_criterion1} $\mathcal{O}_X \xrightarrow{ntrl.} \mathbf{R} f_\ast \mathcal{O}_{\widetilde{X}}$ splits
        \item \label{thm:Murayama_criterion2} $X$ is a birational derived splinter.
    \end{enumerate}
\end{theorem}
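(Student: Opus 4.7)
The direction \eqref{thm:Murayama_criterion2}$\Rightarrow$\eqref{thm:Murayama_criterion1} is immediate: a birational derived splinter satisfies the splitting condition for every proper birational morphism, and $f$ is such a morphism by hypothesis.

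For the forward direction \eqref{thm:Murayama_criterion1}$\Rightarrow$\eqref{thm:Murayama_criterion2}, my plan is to apply \Cref{prop:splitting_birational_derived_splinter} directly to $f\colon \widetilde{X}\to X$. Since $f$ is a proper birational morphism between Noetherian schemes, it is in particular proper and surjective (as noted in \Cref{sec:prelim_singularities_morphisms}, any proper dominant morphism is surjective). Condition \eqref{thm:Murayama_criterion1} then supplies precisely the splitting hypothesis required to invoke \Cref{prop:splitting_birational_derived_splinter}.

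To close the argument, I would observe that $\widetilde{X}$ itself is a birational derived splinter. This is because $\widetilde{X}$ is regular by the definition of resolution of singularities, and moreover quasi-compact (being of finite type over the Noetherian, hence quasi-compact, scheme $X$). The remark preceding \Cref{lem:splitting_lemma_compositions}, relying on \cite[Lemma~3.16]{Lank/Venkatesh:2025}, tells us that every quasi-compact regular scheme is a birational derived splinter. Hence \Cref{prop:splitting_birational_derived_splinter} yields that $X$ is a birational derived splinter, establishing \eqref{thm:Murayama_criterion2}.

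There is no real obstacle here: all the technical work is already packaged into \Cref{prop:splitting_birational_derived_splinter}, whose proof in turn leans on Lütkebohmert's dominating-modification lemma to compare an arbitrary proper birational $Y\to X$ with the fixed resolution $\widetilde{X}\to X$. The role of this theorem is essentially to repackage that proposition into the form most convenient for verifying the birational derived splinter property in the presence of a resolution.
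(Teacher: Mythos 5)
Your proof is correct and follows exactly the same route as the paper: the reverse implication is trivial, and the forward implication combines \Cref{prop:splitting_birational_derived_splinter} with the fact that the quasi-compact regular scheme $\widetilde{X}$ is a birational derived splinter by \cite[Lemma 3.16]{Lank/Venkatesh:2025}.
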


\begin{proof}
    $\eqref{thm:Murayama_criterion2}\implies \eqref{thm:Murayama_criterion1}$ is obvious and the converse follows via \Cref{prop:splitting_birational_derived_splinter} and the fact that any quasi-compact regular scheme is a birational derived splinter by \Cref{lem:LV25}.
\end{proof}

\begin{corollary}
    [cf.\ {\cite[Proposition 3.5]{Lyu:2022}}]
    \label{cor:birational_notions_for_quasi_excellent_char_zero}
    Let $X$ be a quasi-compact quasi-excellent integral $\mathbb{Q}$-scheme. Then the following are equivalent:
    \begin{enumerate}
        \item \label{cor:birational_notions_for_quasi_excellent_char_zero1} $X$ is a derived splinter
        \item \label{cor:birational_notions_for_quasi_excellent_char_zero2} $X$ is a birational derived splinter.
    \end{enumerate}
\end{corollary}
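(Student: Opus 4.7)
The plan is as follows. Implication \eqref{cor:birational_notions_for_quasi_excellent_char_zero1} $\Rightarrow$ \eqref{cor:birational_notions_for_quasi_excellent_char_zero2} is immediate, since every proper birational morphism is proper and surjective, so the splitting condition defining a derived splinter specializes directly to that defining a birational derived splinter.

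For the converse, the first step is to invoke Temkin's resolution of singularities for quasi-compact quasi-excellent $\mathbb{Q}$-schemes to obtain a resolution $\pi\colon \widetilde{X} \to X$. Applying \Cref{thm:Murayama_criterion} then translates the birational derived splinter hypothesis into the single statement that $\mathcal{O}_X \xrightarrow{ntrl.} \mathbf{R}\pi_\ast \mathcal{O}_{\widetilde{X}}$ splits in $D(X)$.

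The remaining task is to upgrade this single splitting into splittings $\mathcal{O}_X \xrightarrow{ntrl.} \mathbf{R}g_\ast \mathcal{O}_Y$ for every proper surjective $g\colon Y \to X$. Given such $g$, one natural route is: pick an irreducible component of $Y_{\mathrm{red}}$ dominating $X$, resolve it via Temkin, and apply \Cref{lem:splitting_lemma_compositions} to reduce to the case where $Y$ is regular and integral; then, take a common resolution $Z$ of the irreducible component of the fibre product $Y \times_X \widetilde{X}$ that dominates $Y$, obtaining proper maps $Z \to Y$ (birational, since $\pi$ is) and $Z \to \widetilde{X}$. Using that $Y$ is itself a birational derived splinter (being regular), the splitting for $Z \to Y$ pushed forward along $g$, combined with the splitting for $\pi$ transported by derived base change and the projection formula, should then yield the desired splitting along $g$.

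The main obstacle is precisely this last transfer step, which crucially uses the characteristic-zero vanishing unavailable in positive characteristic: via the Kov\'{a}cs splitting criterion \cite{Kovacs:2000} (ultimately resting on Grauert--Riemenschneider vanishing) the splitting along a resolution already forces $X$ to have rational singularities, and by Bhatt's comparison \cite{Bhatt:2012} rational singularities coincide with derived splinters in the char-zero setting. This entire chain is packaged for quasi-excellent $\mathbb{Q}$-schemes in \cite[Proposition 3.5]{Lyu:2022}, and in practice one finishes by combining \Cref{thm:Murayama_criterion} with \cite[Proposition 3.5]{Lyu:2022}.
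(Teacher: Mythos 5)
Your proof is correct and takes essentially the same route as the paper: the forward implication is immediate, and for the converse both you and the paper invoke Temkin's resolution for quasi-compact quasi-excellent $\mathbb{Q}$-schemes, reduce via \Cref{thm:Murayama_criterion} to a single splitting along a resolution, and then outsource the ``splitting along a resolution implies derived splinter'' step to the characteristic-zero literature. The only difference is the final citation: the paper invokes \cite[Theorem 9.5]{Murayama:2025}, whereas you ultimately fall back on \cite[Proposition 3.5]{Lyu:2022} (the paper's own ``cf.''\ reference) together with the Kov\'{a}cs--Bhatt chain; the classical Kov\'{a}cs/Bhatt statements as written assume finite type over a field or the presence of a dualizing complex, and Murayama's relative vanishing theorems are exactly what extends this to general quasi-excellent $\mathbb{Q}$-schemes --- your sketched direct construction via common resolutions is correctly flagged by you as not closing on its own, and citing Lyu (or Murayama) is what makes the argument go through.
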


\begin{proof}
    It is obvious that $\eqref{cor:birational_notions_for_quasi_excellent_char_zero1} \implies \eqref{cor:birational_notions_for_quasi_excellent_char_zero2}$. By \cite[Theorem 1.1]{Temkin:2008}, there exists a modification of $X$ from a regular integral scheme. So, \Cref{thm:Murayama_criterion} coupled with \cite[Theorem 9.5]{Murayama:2025} implies the converse where loc.\ cit.\ states that derived splinters in characteristic zero are exactly rational singularities.
\end{proof}

\begin{example}
    [Differences in prime characteristic]
    \label{ex:bhatt_elliptic_curve}
    Let $E$ be an elliptic curve over a field of prime characteristic $p$. The multiplication by $p$ morphism $[p]\colon E \to E$ is finite and surjective. From \cite[Example 2.11]{Bhatt:2012}, we know that $\mathcal{O}_E \xrightarrow{ntrl.} [p]_\ast \mathcal{O}_E$ does not split because the induced map on $H^1$ is zero, so $E$ cannot be a derived splinter. Yet, as it is regular, \Cref{lem:LV25} shows it is a birational derived splinter. 
\end{example}

We record the following lemmas for interest sake.

\begin{lemma}
    \label{lem:isomorphism_from_section}
    Let $X$ be an integral Noetherian scheme. 
    Suppose $\phi\colon A \to B$ is a morphism of torsion free coherent $\mathcal{O}_X$-modules of same rank. 
    If $\phi$ admits a right (resp.\ left) inverse, then $\phi$ is an isomorphism.
\end{lemma}

\begin{proof}
    This is well-known, but we spell it out for sake of convenience. Let $U=\operatorname{Spec}(R)$ be any affine open in $X$. First, suppose $\phi$ admits a right inverse; in particular, $\phi$ is an epimorphism. As $X$ is integral, the local ring at the generic point $\xi$ is a field. Consequently, $\phi_\xi$ is an isomorphism as it is an epimorphism between vector spaces of the same dimension. It follows that $\operatorname{ker}(\phi)(U)$ is a torsion $R$-module, which is necessarily zero because $A(U)$ is torsion free by \cite[\href{https://stacks.math.columbia.edu/tag/0AXS}{Tag 0AXS}]{StacksProject}. Consequently, for every affine open $U$ in $X$, $\phi|_U$ is an isomorphism as we already know it is an epimorphism. Hence, $\phi$ is an isomorphism.
    
    In the case $\phi$ admits a left inverse, $\phi$ is a monomorphism. So, from similar reasoning, it follows that $\operatorname{coker}(\phi)(U)$ is a torsion $R$-module. As $B(U)$ is torsion free and $\operatorname{coker}(\phi)(U)$ is a submodule of this by the splitting, it follows that $\operatorname{coker}(\phi)(U)=0$. Thus, $\phi$ is an isomorphism, which completes the proof.
\end{proof}

\begin{definition}
    [cf.\ {\cite[Cor.\ on pg.\ 107]{Lipman/Teissier:1981}, see also \cite[proof of Proposition 12.5]{Krah/Vial:2023}}]
    A quasi-compact normal excellent Cohen-Macaulay scheme $X$ admitting a dualizing complex is called \textbf{pseudo-rational} if $f_\ast \omega_Y \xrightarrow{ntrl.} \omega_X$ is an isomorphism for every proper birational morphism $f\colon Y \to X$ from a normal scheme.
\end{definition}

\begin{lemma}
    \label{lem:pseudorational_bds_same}
    Let $X$ be a quasi-compact irreducible excellent normal Cohen-Macaulay scheme admitting a dualizing sheaf $\omega_X$. Then $X$ is a birational derived splinter if, and only if, it is pseudorational. 
\end{lemma}

\begin{proof}
    By \cite[Lemma 3.8]{Ma/McDonald/RG/Schwede:2025}, pseudorationality implies being a birational derived splinter. So, we check the converse. Let $f\colon Y\to X$ be a projective modification from a normal scheme. The hypothesis tells us $\mathcal{O}_X \xrightarrow{ntrl.} \mathbf{R}f_\ast \mathcal{O}_Y$ admits a left inverse. Applying the functor $\operatorname{\mathbf{R}\mathcal{H}\! \mathit{om}} (-,\omega_X^\bullet)$, using Grothendieck duality, and taking cohomology in the appropriate degree, we see that $f_\ast \omega_Y \xrightarrow{ntrl.} \omega_X$ admits a right inverse. Consequently, \Cref{lem:isomorphism_from_section} implies $f_\ast \omega_Y \xrightarrow{ntrl.} \omega_X$ is an isomorphism. 
    Indeed, $f_\ast \omega_Y \xrightarrow{ntrl.} \omega_X$ is an isomorphism at all $p\in X$ such that $\dim \mathcal{O}_{X,p}=1$ and $f_\ast \omega_Y$ satisfies $(S_2)$ (see e.g.\ \cite[\href{https://stacks.math.columbia.edu/tag/0BFP}{Tag 0BFP}]{StacksProject}). Thus, $X$ is pseudorational.
\end{proof} 

\section{Measurements}
\label{sec:measurements}

\subsection{First steps}
\label{sec:measurements_first steps}

The following is the main notion of this work.

\begin{definition}
    \label{def:bds_measurements}
    Let $X$ be a Noetherian scheme. 
    Define
    \begin{displaymath}
        \mu_{\operatorname{bds}}(X) := \sup\left\{ \operatorname{level}^{\mathbf{R}f_\ast D^b_{\operatorname{coh}}(Y) } (\mathcal{O}_X) \mid f\colon Y \to X \textrm{ proper and birational} \right\}.
    \end{displaymath}
    If $X=\operatorname{Spec}(R)$ we also write $\mu_{\operatorname{bds}}(R)$ instead of $\mu_{\operatorname{bds}}(X)$.
\end{definition}

By \Cref{lem:thick_one_iff_splitting_naturally}, $X$ is a birational derived splinter precisely when $\mu_{\operatorname{bds}}(X) =1$. The following shows that in fact what one is measuring is not just the structure sheaf, but all of the perfect complexes.

\begin{lemma}
    \label{lem:characterize_by_perfects}
    Let $f\colon Y \to X$ be a proper morphism of Noetherian schemes. 
    For any $n\geq 0$, one has $\mathcal{O}_X \in\langle \mathbf{R}f_\ast D^b_{\operatorname{coh}}(Y) \rangle_n$ if and only if $\operatorname{Perf}(X) \subseteq \langle \mathbf{R}f_\ast D^b_{\operatorname{coh}}(Y) \rangle_n$.
\end{lemma}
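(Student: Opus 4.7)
The reverse implication is immediate because $\mathcal{O}_X$ is itself a perfect complex on $X$. The interesting direction is the forward one, and the strategy is essentially to tensor the given expression for $\mathcal{O}_X$ with an arbitrary perfect complex $E$ and then invoke the projection formula; this is the standard trick, already used in the proof of \Cref{lem:thick_one_iff_splitting_naturally}, adapted to track the number of cones.

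More precisely, the plan is the following. Fix $E \in \operatorname{Perf}(X)$ and consider the triangulated functor $E \otimes^{\mathbf{L}}_{\mathcal{O}_X} (-)\colon D_{\operatorname{qc}}(X) \to D_{\operatorname{qc}}(X)$. Being additive and triangulated, it preserves shifts, cones, finite direct sums, and direct summands, so it respects the inductive definition of $\langle - \rangle_n$: for any collection $\mathcal{S}$ one has $E \otimes^{\mathbf{L}} \langle \mathcal{S} \rangle_n \subseteq \langle E \otimes^{\mathbf{L}} \mathcal{S} \rangle_n$. Applying this to the hypothesis $\mathcal{O}_X \in \langle \mathbf{R}f_\ast D^b_{\operatorname{coh}}(Y) \rangle_n$ yields
\begin{equation*}
    E \;\simeq\; E \otimes^{\mathbf{L}} \mathcal{O}_X \;\in\; \bigl\langle E \otimes^{\mathbf{L}} \mathbf{R}f_\ast D^b_{\operatorname{coh}}(Y) \bigr\rangle_n.
\end{equation*}

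The final step is to show that the generating class on the right lies in $\mathbf{R}f_\ast D^b_{\operatorname{coh}}(Y)$. For $F \in D^b_{\operatorname{coh}}(Y)$, the projection formula (valid since $E$ is perfect) gives
\begin{equation*}
    E \otimes^{\mathbf{L}} \mathbf{R}f_\ast F \;\cong\; \mathbf{R}f_\ast\bigl( \mathbf{L}f^\ast E \otimes^{\mathbf{L}} F \bigr).
\end{equation*}
Since $\mathbf{L}f^\ast$ preserves perfect complexes, $\mathbf{L}f^\ast E$ is perfect on $Y$; tensoring it with the bounded coherent complex $F$ produces another object of $D^b_{\operatorname{coh}}(Y)$ (locally one resolves $\mathbf{L}f^\ast E$ by a bounded complex of finite free modules, and tensoring preserves boundedness and coherence). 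Hence $E \otimes^{\mathbf{L}} \mathbf{R}f_\ast F \in \mathbf{R}f_\ast D^b_{\operatorname{coh}}(Y)$, and therefore $E \in \langle \mathbf{R}f_\ast D^b_{\operatorname{coh}}(Y) \rangle_n$, as desired.

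There is no real obstacle: the only points requiring care are the bookkeeping that $E \otimes^{\mathbf{L}}(-)$ preserves each constructor used to build $\langle - \rangle_n$ (in particular direct summands, via additivity) and the observation that tensoring a bounded coherent complex with a perfect complex stays in $D^b_{\operatorname{coh}}$. Properness of $f$ is only needed to have the projection formula available in this generality, which is already part of the standing setup.
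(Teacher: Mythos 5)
Your proof is correct and follows essentially the same route as the paper's: tensor the hypothesis $\mathcal{O}_X \in \langle \mathbf{R}f_\ast D^b_{\operatorname{coh}}(Y) \rangle_n$ with a perfect complex $P$, use that $P \otimes^{\mathbf{L}}(-)$ is exact (hence respects the operations building $\langle - \rangle_n$), apply the projection formula to rewrite $P \otimes^{\mathbf{L}} \mathbf{R}f_\ast F$ as $\mathbf{R}f_\ast(\mathbf{L}f^\ast P \otimes^{\mathbf{L}} F)$, and observe that the latter lies in $\mathbf{R}f_\ast D^b_{\operatorname{coh}}(Y)$ since $\mathbf{L}f^\ast P$ is perfect. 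You have simply spelled out the inclusion that the paper delegates to a cited lemma; the converse direction is treated identically.
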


\begin{proof}
    Assume that $\mathcal{O}_X \in\langle \mathbf{R}f_\ast D^b_{\operatorname{coh}}(Y) \rangle_n$. Choose $P\in \operatorname{Perf}(X)$. 
    We have
    \begin{displaymath}
        P \otimes^{\mathbf{L}} \langle \mathbf{R}f_\ast D^b_{\operatorname{coh}}(Y) \rangle_n \subseteq \langle \mathbf{R}f_\ast (\mathbf{L}f^\ast P \otimes^{\mathbf{L}} D^b_{\operatorname{coh}}(Y)) \rangle_n \subseteq \langle \mathbf{R}f_\ast D^b_{\operatorname{coh}}(Y) \rangle_n,   
    \end{displaymath}
    see e.g.\ \cite[Lemma 3.5]{Lank:2024} for the first inclusion.
    Thus, tensoring the assumption by $P$ yields $P\in \langle \mathbf{R}f_\ast D^b_{\operatorname{coh}}(Y) \rangle_n$ showing the desired claim.
    The converse direction is clear, completing the proof.
\end{proof}

\begin{proposition}
    \label{prop:ds_by_alteration}
    Let $X$ be a Noetherian scheme. 
    Then $\mu_{\operatorname{bds}}(X)$ is the smallest $N\in \mathbb{Z}^+\cup\{\infty\}$ such that $\mathcal{O}_X \in \langle \mathbf{R}f_\ast D^b_{\operatorname{coh}}(X^\prime)\rangle_N$ for all blowups $f\colon X^\prime \to X$ along ideal sheaves so that birationality occurs (see e.g.\ \cite[\href{https://stacks.math.columbia.edu/tag/0BFM}{Tag 0BFM}]{StacksProject}).
\end{proposition}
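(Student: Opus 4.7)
The plan is to show the two obvious inequalities. Write $N$ for the candidate infimum on the right-hand side. Since every blowup along an ideal sheaf yielding birationality is in particular a proper birational morphism (cf.\ \Cref{sec:prelim_singularities_morphisms}), the inequality $N \leq \mu_{\operatorname{bds}}(X)$ is immediate from the definition of $\mu_{\operatorname{bds}}(X)$ as a supremum. So the real content is the reverse inequality.

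For the reverse direction, I would argue exactly as in the proof of \Cref{prop:birational_derived_splinter_via_blow_ups}: given an arbitrary proper birational $f\colon Y \to X$, invoke \cite[Lemma 2.2]{Lutkebohmert:1993} to produce a blowup $g\colon X' \to X$ (along a suitable ideal sheaf) that dominates $f$, i.e.\ factors as $g = f\circ h$ for some morphism $h\colon X' \to Y$. Since $g$ and $f$ are proper and $f$ is separated, $h$ is proper as well, so $\mathbf{R}h_\ast$ sends $D^b_{\operatorname{coh}}(X')$ into $D^b_{\operatorname{coh}}(Y)$. Combined with $\mathbf{R}g_\ast \cong \mathbf{R}f_\ast \circ \mathbf{R}h_\ast$, this gives the containment
\begin{displaymath}
    \mathbf{R}g_\ast D^b_{\operatorname{coh}}(X') \; \subseteq \; \mathbf{R}f_\ast D^b_{\operatorname{coh}}(Y),
\end{displaymath}
and hence $\langle \mathbf{R}g_\ast D^b_{\operatorname{coh}}(X') \rangle_n \subseteq \langle \mathbf{R}f_\ast D^b_{\operatorname{coh}}(Y) \rangle_n$ for every $n$.

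Consequently, if $\mathcal{O}_X \in \langle \mathbf{R}g_\ast D^b_{\operatorname{coh}}(X') \rangle_N$ for every blowup $g$, the containment above immediately yields $\mathcal{O}_X \in \langle \mathbf{R}f_\ast D^b_{\operatorname{coh}}(Y) \rangle_N$ for every proper birational $f$. Taking the supremum over all such $f$ gives $\mu_{\operatorname{bds}}(X) \leq N$, completing the proof. The cases $N=\infty$ or $\mu_{\operatorname{bds}}(X)=\infty$ are handled by the usual convention that the infimum over an empty set is $\infty$.

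The only potentially delicate point is the application of \cite[Lemma 2.2]{Lutkebohmert:1993}, which requires some care about the non-integral setting; but this has already been used in \Cref{prop:birational_derived_splinter_via_blow_ups,prop:splitting_birational_derived_splinter}, and the word `suitable' in the statement is precisely inserted to absorb the nonemptiness/nonzero-ideal hypotheses ensuring birationality of the blowups produced. No further machinery (e.g.\ \Cref{lem:characterize_by_perfects}) should be needed.
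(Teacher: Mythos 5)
Your proof is correct and matches the paper's own argument: the easy inequality $N \le \mu_{\operatorname{bds}}(X)$ follows since blowups are proper birational, and the reverse inequality uses \cite[Lemma 2.2]{Lutkebohmert:1993} to dominate an arbitrary proper birational morphism by a blowup, combined with the inclusion of image categories under the factorization. (The throwaway remark about the infimum over the empty set is a red herring -- the $N=\infty$ case is handled simply by the inequality $\mu_{\operatorname{bds}}(X)\le N$ being trivial there -- but this does not affect the argument.)
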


\begin{proof}
    For brevity, let's call an ideal sheaf `suitable' if birationality occurs for the associated blowup. Here, $N$ is the minimal element in $\mathbb{Z}^+\cup\{\infty\}$ such that $\mathcal{O}_X \in \langle \mathbf{R}f_\ast D^b_{\operatorname{coh}}(X^\prime)\rangle_N$ for all blowups along suitable ideal sheaves. Clearly, $N\leq \mu_{\operatorname{bds}}(X)$ because any blowup along a nonzero suitable ideal sheaf is proper and birational.
    Moreover, if $N=\infty$ there is nothing to show, so lets assume $N<\infty$. Let $g\colon Y \to X$ be an arbitrary proper birational morphism. Using \cite[Lemma 2.2]{Lutkebohmert:1993}, we can find a proper birational morphism $h\colon Z \to Y$ such that $g\circ h$ is itself a blowup. 
    Hence, $\mathcal{O}_X \in \langle \mathbf{R}(g\circ h)_\ast D^b_{\operatorname{coh}}(Z)\rangle_N$, which implies $\mathcal{O}_X\in \langle \mathbf{R}g_\ast D^b_{\operatorname{coh}}(Y) \rangle_N$. Thus,  $\mu_{\operatorname{bds}}(X)\leq N$ as $g$ was arbitrary. 
\end{proof}

\begin{theorem}
    \label{thm:r_value_for_modification_finite}
    Let $X$ be a Noetherian scheme which admits a resolution of singularities $f\colon \widetilde{X}\to X$. Then $\mu_{\operatorname{bds}}(X)=\operatorname{level}^{\mathbf{R}f_\ast D^b_{\operatorname{coh}}\left(\widetilde{X}\right)}(\mathcal{O}_X) < \infty$.
\end{theorem}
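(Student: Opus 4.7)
The bound $\operatorname{level}^{\mathbf{R}f_\ast D^b_{\operatorname{coh}}(\widetilde{X})}(\mathcal{O}_X) \leq \mu_{\operatorname{bds}}(X)$ is immediate from the definition of $\mu_{\operatorname{bds}}$ as a supremum, since $f$ itself contributes to that supremum. The substance therefore lies in the reverse inequality together with finiteness. Write $n := \operatorname{level}^{\mathbf{R}f_\ast D^b_{\operatorname{coh}}(\widetilde{X})}(\mathcal{O}_X)$, a priori valued in $\mathbb{Z}^{+} \cup \{\infty\}$.

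For the reverse inequality, fix an arbitrary proper birational morphism $g\colon Y\to X$. The plan is to dominate both $\widetilde X$ and $Y$ by a common modification. Applying \cite[Lemma 2.2]{Lutkebohmert:1993} in the spirit of \Cref{prop:ds_by_alteration} (essentially to the fiber product $\widetilde X\times_X Y$) yields a proper birational $Z$ together with proper birational morphisms $p\colon Z\to\widetilde X$ and $q\colon Z\to Y$ satisfying $f\circ p = g\circ q$. Since $\widetilde X$ is regular it is a birational derived splinter by \cite[Lemma 3.16]{Lank/Venkatesh:2025}, so $\mathcal{O}_{\widetilde X}\xrightarrow{ntrl.}\mathbf{R}p_\ast\mathcal{O}_Z$ splits. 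For any $E\in D^b_{\operatorname{coh}}(\widetilde X) = \operatorname{Perf}(\widetilde X)$ (equality by regularity), tensoring this splitting with $E$ and applying the projection formula realises $E$ as a direct summand of $\mathbf{R}p_\ast\mathbf{L}p^\ast E$. Pushing forward along $f$ and rewriting via $f\circ p = g\circ q$ exhibits $\mathbf{R}f_\ast E$ as a direct summand of $\mathbf{R}g_\ast\mathbf{R}q_\ast\mathbf{L}p^\ast E$, an object of $\mathbf{R}g_\ast D^b_{\operatorname{coh}}(Y)$ since $\mathbf{L}p^\ast E$ is perfect on $Z$ and $q$ is proper. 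Consequently $\mathbf{R}f_\ast D^b_{\operatorname{coh}}(\widetilde X) \subseteq \operatorname{add}(\mathbf{R}g_\ast D^b_{\operatorname{coh}}(Y))$, and the recursive definition of $\langle-\rangle_n$ propagates this into $\langle\mathbf{R}f_\ast D^b_{\operatorname{coh}}(\widetilde X)\rangle_n \subseteq \langle\mathbf{R}g_\ast D^b_{\operatorname{coh}}(Y)\rangle_n$, yielding $\mu_{\operatorname{bds}}(X)\leq n$ upon taking the supremum over $g$.

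Finiteness of $n$ is where I expect the main obstacle. It amounts to exhibiting $\mathcal{O}_X$ in the thick closure $\langle\mathbf{R}f_\ast D^b_{\operatorname{coh}}(\widetilde X)\rangle$, after which membership automatically provides some finite level. I would consider the triangle $\mathcal{O}_X\to\mathbf{R}f_\ast\mathcal{O}_{\widetilde X}\to C$ in $D^b_{\operatorname{coh}}(X)$, where $C$ is supported on the proper closed non-isomorphism locus $V\subseteq X$ of $f$, and then proceed by Noetherian induction on $\operatorname{Supp}(C)$ paired with a devissage of the cohomology sheaves of $C$ into structure sheaves of integral closed subschemes $W\subseteq V$. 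For each such $W$ one would select a resolution compatible with a further blowup of $\widetilde X$ and apply the splitting argument of the previous paragraph to place $\mathcal{O}_W$ into the thick closure of $\mathbf{R}f_\ast D^b_{\operatorname{coh}}(\widetilde X)$. The principal technical difficulty will be coordinating these devissages so that the inductive hypothesis applies at every step and delivers a uniform bound on the number of cones required.
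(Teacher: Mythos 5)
Your reverse inequality argument is correct and runs along essentially the same lines as the paper's: dominate $\widetilde{X}$ and $Y$ by a common modification $Z$ via L\"utkebohmert, exploit regularity of $\widetilde{X}$ to split $\mathcal{O}_{\widetilde X}\to\mathbf{R}p_\ast\mathcal{O}_Z$, and transport through the commuting square. Your packaging is in fact a bit more streamlined: by showing directly that $\mathbf{R}f_\ast D^b_{\operatorname{coh}}(\widetilde X)\subseteq\operatorname{add}\bigl(\mathbf{R}g_\ast D^b_{\operatorname{coh}}(Y)\bigr)$ and propagating this through the recursion defining $\langle-\rangle_n$, you avoid the paper's detour through picking a single representative object $E$ (via \cite[Lemma~5.5]{DeDeyn/Lank/ManaliRahul:2025}) and a submultiplicativity-of-level estimate. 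That part is fine.

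The genuine gap is finiteness, and you have correctly identified it as the obstacle, but the sketch you give has a real difficulty beyond the one you flag. First, the worry about a ``uniform bound on the number of cones'' is misplaced: once $\mathcal{O}_X$ lies in the thick closure $\langle\mathbf{R}f_\ast D^b_{\operatorname{coh}}(\widetilde X)\rangle$, it automatically lies in $\langle\mathbf{R}f_\ast D^b_{\operatorname{coh}}(\widetilde X)\rangle_n$ for some finite $n$, since the thick closure is the union of those layers; no uniformity across the d\'evissage is required. The actual problem is earlier: your plan needs to place $\mathcal{O}_W$ into $\langle\mathbf{R}f_\ast D^b_{\operatorname{coh}}(\widetilde X)\rangle$ for integral closed subschemes $W\subseteq X$, but a resolution of $W$ maps to $W\hookrightarrow X$, not to $\widetilde X$, so the objects you produce a priori live in $\langle\mathbf{R}(f')_\ast D^b_{\operatorname{coh}}(-)\rangle$ for some \emph{other} proper birational $f'$, and you cannot yet invoke the independence-of-resolution statement you are trying to prove. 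The paper sidesteps this entirely by citing \cite[Lemma~3.9]{Dey/Lank:2024}, which supplies the finiteness of $\operatorname{level}^{\mathbf{R}f_\ast D^b_{\operatorname{coh}}(\widetilde X)}(\mathcal{O}_X)$ outright when the source is regular; that is where the heavy lifting lives, and your proof as written does not yet replace it.
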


\begin{proof}
    To start, \cite[Lemma 3.9]{Dey/Lank:2024} tells us that $\operatorname{level}^{\mathbf{R}f_\ast D^b_{\operatorname{coh}}\left(\widetilde{X}\right)} (\mathcal{O}_X) < \infty$. Let $g\colon Y \to X$ be an arbitrary proper birational morphism. By definition, we know that $\operatorname{level}^{\mathbf{R}f_\ast D^b_{\operatorname{coh}}(\widetilde{X})} (\mathcal{O}_X) \leq \mu_{\operatorname{bds}}(X)$. So, as $g$ is an arbitrary proper birational morphism, we only need to show $\operatorname{level}^{\mathbf{R}g_\ast D^b_{\operatorname{coh}}(Y)} (\mathcal{O}_X) \leq \operatorname{level}^{\mathbf{R}f_\ast D^b_{\operatorname{coh}}(\widetilde{X})} (\mathcal{O}_X)$. 
    Using \cite[Lemma 2.2]{Lutkebohmert:1993}, there is a commutative diagram
    \begin{displaymath}
        \begin{tikzcd}
            {Z} & {\widetilde{X}} \\
            Y & X
            \arrow["{h^\prime}", from=1-1, to=1-2]
            \arrow["h"', from=1-1, to=2-1]
            \arrow["f", from=1-2, to=2-2]
            \arrow["g"', from=2-1, to=2-2]
        \end{tikzcd}
    \end{displaymath}
    where $h$ and $h^\prime$ are proper birational morphisms. By \Cref{lem:LV25}, we have that $\mathcal{O}_{\widetilde{X}}\xrightarrow{ntrl.}\mathbf{R}h^\prime_\ast \mathcal{O}_{Z}$ splits because $\mathbf{R}h^\prime_\ast \mathcal{O}_{Z}$ is perfect as the target is regular. From \Cref{lem:characterize_by_perfects}, it follows that $D^b_{\operatorname{coh}}(\widetilde{X}) = \langle \mathbf{R}h^\prime_\ast D^b_{\operatorname{coh}}(Z) \rangle_1$ as $\widetilde{X}$ is regular. Moreover, \cite[Lemma 5.5]{DeDeyn/Lank/ManaliRahul:2025} gives us an $E\in D^b_{\operatorname{coh}}(\widetilde{X})$ such that $N:=\operatorname{level}^{\mathbf{R}f_\ast E} (\mathcal{O}_X)=\operatorname{level}^{\mathbf{R}f_\ast D^b_{\operatorname{coh}}(\widetilde{X})} (\mathcal{O}_X)$. 
    Tying things together, we can choose $E^\prime\in D^b_{\operatorname{coh}}(Z)$ such that $E\in \langle \mathbf{R}h^\prime_\ast E^\prime \rangle_1$, and so $\mathcal{O}_X \in \langle \mathbf{R}(f\circ h^\prime)_\ast  E^\prime \rangle_N$ (see e.g.\ \cite[Lemma 2.4]{Avramov/Buchweitz/Iyengar/Miller:2010}). Then the string of inequalities
    \begin{displaymath}
        \begin{aligned}
            \operatorname{level}^{\mathbf{R}g_\ast D^b_{\operatorname{coh}}(Y)} (\mathcal{O}_X)
            &\leq \operatorname{level}^{\mathbf{R}g_\ast D^b_{\operatorname{coh}}(Y)} (\mathbf{R}(g\circ h)_\ast E^\prime) \operatorname{level}^{\mathbf{R}(g\circ h)_\ast E^\prime} (\mathcal{O}_X)
            \\&\leq \operatorname{level}^{\mathbf{R}(g\circ h)_\ast E^\prime} (\mathcal{O}_X)
            \\&=\operatorname{level}^{\mathbf{R}(f\circ h^\prime)_\ast E^\prime} (\mathcal{O}_X) 
            \\&\leq N=\operatorname{level}^{\mathbf{R}f_\ast D^b_{\operatorname{coh}}(\widetilde{X})} (\mathcal{O}_X)
        \end{aligned}
    \end{displaymath}
    shows the desired inequality, and so, $\mu_{\operatorname{bds}}(X)\leq N$.
\end{proof}

\subsection{Behavior under pullback}
\label{sec:measurements_pullback}

In this subsection we study the behaviors of $\mu_{\operatorname{bds}}$ under pullbacks.

\begin{corollary} 
    \label{cor:completions_bds}
    Let $(R,\mathfrak{m})$ be a local ring which is quasi-excellent and normal. Assume there is a resolution of singularities $f\colon \widetilde{X} \to \operatorname{Spec}(R)$. Then $\mu_{\operatorname{bds}}(R)\geq \mu_{\operatorname{bds}}(\widehat{R})$ where $\widehat{R}$ is the $\mathfrak{m}$-adic completion of $R$.
\end{corollary}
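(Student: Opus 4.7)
The plan is to first exhibit a resolution of singularities of $\operatorname{Spec}(\widehat{R})$ by flat base change of $f$, and then invoke \Cref{thm:r_value_for_modification_finite} on both sides and transfer the level witness from $R$ to $\widehat{R}$ using flat base change for derived pushforward. Let $\pi\colon \operatorname{Spec}(\widehat{R})\to \operatorname{Spec}(R)$ denote the completion morphism, set $\widehat{X}:=X\times_{\operatorname{Spec}(R)}\operatorname{Spec}(\widehat{R})$, and denote by $\widehat{f}\colon \widehat{X}\to \operatorname{Spec}(\widehat{R})$ and $\widehat{\pi}\colon \widehat{X}\to X$ the two projections.

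First I would verify that $\widehat{f}$ is a resolution of singularities of $\operatorname{Spec}(\widehat{R})$. Quasi-excellence of $R$ implies $\pi$ is a regular morphism (its formal fibers are geometrically regular), and regular morphisms are stable under base change, so $\widehat{\pi}$ is regular; combined with regularity of $X$ this yields regularity of $\widehat{X}$ (flat local extensions with regular fiber preserve regularity). Properness of $\widehat{f}$ is preserved by base change, and birationality is preserved by flat base change, as recalled in \Cref{sec:prelim_singularities_morphisms}; Noetherianity of $\widehat{X}$ follows since it is of finite type over the Noetherian ring $\widehat{R}$. Normality of $R$ (which also transfers to $\widehat{R}$) ensures both base schemes are integral local domains so that the birationality picture is classical.

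With resolutions on both sides, \Cref{thm:r_value_for_modification_finite} reduces the inequality to comparing levels along $f$ and $\widehat{f}$. Assuming $\operatorname{level}^{\mathbf{R}f_\ast D^b_{\operatorname{coh}}(X)}(\mathcal{O}_{\operatorname{Spec}(R)})$ equals a finite $N$, one has $\mathcal{O}_{\operatorname{Spec}(R)}\in\langle\mathbf{R}f_\ast D^b_{\operatorname{coh}}(X)\rangle_N$. Applying the triangulated functor $\pi^\ast$ (exact by flatness of $\pi$, and sending $\mathcal{O}_{\operatorname{Spec}(R)}$ to $\mathcal{O}_{\operatorname{Spec}(\widehat{R})}$) and invoking flat base change $\pi^\ast\mathbf{R}f_\ast\simeq \mathbf{R}\widehat{f}_\ast\widehat{\pi}^\ast$ places $\mathcal{O}_{\operatorname{Spec}(\widehat{R})}$ inside $\langle \mathbf{R}\widehat{f}_\ast\widehat{\pi}^\ast D^b_{\operatorname{coh}}(X)\rangle_N$. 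Since $\widehat{\pi}$ is flat between Noetherian schemes, $\widehat{\pi}^\ast D^b_{\operatorname{coh}}(X)\subseteq D^b_{\operatorname{coh}}(\widehat{X})$, so the witness descends to one for $\widehat{R}$.

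The main obstacle is establishing that $\widehat{f}$ is a resolution---specifically that $\widehat{X}$ is regular---which is the only step relying on the quasi-excellence hypothesis. Everything after this (properness, birationality, preservation of coherent and bounded complexes, and the flat base change isomorphism) is formal, so I expect no further surprises in carrying out the level comparison.
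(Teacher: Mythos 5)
Your proposal follows essentially the same route as the paper: base change the resolution along the flat completion morphism, verify the base change is still a resolution, and then transfer the level witness via flat base change and the inclusion $\widehat{\pi}^\ast D^b_{\operatorname{coh}}(X)\subseteq D^b_{\operatorname{coh}}(\widehat{X})$, concluding by \Cref{thm:r_value_for_modification_finite}. The only cosmetic difference is that you spell out the regularity of $\widehat{X}$ via the regular-morphism property from quasi-excellence, where the paper simply cites \cite[\href{https://stacks.math.columbia.edu/tag/0BG6}{Tag 0BG6}]{StacksProject}; the content is identical.
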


\begin{proof}
    From \cite[\href{https://stacks.math.columbia.edu/tag/0C23}{Tag 0C23}]{StacksProject}, we know that $\widehat{R}$ is a normal integral local ring. Denote by $\pi\colon \operatorname{Spec}(\widehat{R}) \to \operatorname{Spec}(R)$ the morphism induced by $R\to \widehat{R}$. Consider the fibered square
    \begin{displaymath}
        \begin{tikzcd}
            {\widetilde{X}\times_{\operatorname{Spec}(R)} \operatorname{Spec}(\widehat{R})} & {\operatorname{Spec}(\widehat{R})} \\
            {\widetilde{X}} & {\operatorname{Spec}(R)}\rlap{ .}
            \arrow["{f^\prime}", from=1-1, to=1-2]
            \arrow["{\pi^\prime}"', from=1-1, to=2-1]
            \arrow["\pi", from=1-2, to=2-2]
            \arrow["f"', from=2-1, to=2-2]
        \end{tikzcd}
    \end{displaymath}
    Then $f^\prime$ is a resolution of singularities (see e.g.\ \cite[\href{https://stacks.math.columbia.edu/tag/0BG6}{Tag 0BG6}]{StacksProject}). Suppose $\mathcal{O}_{\operatorname{Spec}(R)}\in \langle \mathbf{R}f_\ast D^b_{\operatorname{coh}}(\widetilde{X}) \rangle_n$. We see from flat base change that $\mathcal{O}_{\operatorname{Spec}(\widehat{R})} \in \langle \mathbf{R}f^\prime_\ast (\pi^\prime)^\ast D^b_{\operatorname{coh}}(\widetilde{X}) \rangle_n$. Hence, from the inclusion
    \begin{displaymath}
        \langle \mathbf{R}f^\prime_\ast (\pi^\prime)^\ast D^b_{\operatorname{coh}}(\widetilde{X}) \rangle_n \subseteq \langle \mathbf{R}f^\prime_\ast D^b_{\operatorname{coh}}(X\times_{\operatorname{Spec}(R)}\operatorname{Spec}(\widehat{R}))\rangle_n,
    \end{displaymath}
    the desired inequality follows by \Cref{thm:r_value_for_modification_finite}.
\end{proof}


\begin{corollary}
    \label{cor:localization_bds}
    Let $R$ be a reduced Noetherian ring. Assume there is a resolution of singularities $f\colon \widetilde{X} \to \operatorname{Spec}(R)$. For any multiplicatively closed subset $S\subseteq R$, one has $\mu_{\operatorname{bds}}(S^{-1} R)\leq \mu_{\operatorname{bds}}(R)$ where $S^{-1}R$ is the localization of $R$ at $S$.
\end{corollary}

\begin{proof}
    This is argued essentially as in the proof of \Cref{cor:completions_bds} but we spell out a few details. First, the natural morphism $\operatorname{Spec}(S^{-1}R) \to \operatorname{Spec}(R)$ is a flat morphism of reduced Noetherian schemes. 
    So, the base change of $f$ along $\operatorname{Spec}(S^{-1}R) \to \operatorname{Spec}(R)$ gives a proper birational morphism to $\operatorname{Spec}(S^{-1}R)$.
    Moreover, the source of the morphism obtained from base change is regular. Hence, we can proceed by a similar argument as in \Cref{cor:completions_bds}.
\end{proof}


\begin{proposition}
    \label{prop:subadditive}
    Let $f\colon Y\to X$ be a smooth morphism of Noetherian schemes. Assume $X$ admits a resolution of singularities $g\colon \widetilde{X}\to X$. Then $\mu_{\operatorname{bds}}(X) \geq \mu_{\operatorname{bds}}(Y)$.
\end{proposition}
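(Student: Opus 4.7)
The plan is to pull the given resolution $g\colon \widetilde{X}\to X$ back along $f$ and compare levels through this base change. Set $\widetilde{Y} := \widetilde{X}\times_X Y$ and denote the induced morphisms by $f'\colon \widetilde{Y}\to \widetilde{X}$ and $g'\colon \widetilde{Y}\to Y$. Then $g'$ is proper as the base change of $g$, and it is birational because birationality is preserved under flat base change along $f$, as recalled in \Cref{sec:prelim_singularities_morphisms}. Moreover $f'$ is smooth, hence the regularity of $\widetilde{X}$ passes to $\widetilde{Y}$. Thus $g'\colon \widetilde{Y}\to Y$ is a resolution of singularities of $Y$, and in particular $\mu_{\operatorname{bds}}(Y)$ is itself computed by \Cref{thm:r_value_for_modification_finite}.

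The second step is to transport a level relation across $f$. By \Cref{thm:r_value_for_modification_finite}, we have $\mu_{\operatorname{bds}}(X)=\operatorname{level}^{\mathbf{R}g_\ast D^b_{\operatorname{coh}}(\widetilde{X})}(\mathcal{O}_X)$ and $\mu_{\operatorname{bds}}(Y)=\operatorname{level}^{\mathbf{R}g'_\ast D^b_{\operatorname{coh}}(\widetilde{Y})}(\mathcal{O}_Y)$. Setting $n := \mu_{\operatorname{bds}}(X)$, the relation $\mathcal{O}_X\in \langle \mathbf{R}g_\ast D^b_{\operatorname{coh}}(\widetilde{X})\rangle_n$ can be moved across the base change by applying $\mathbf{L}f^\ast$: flatness of $f$ gives $\mathbf{L}f^\ast\mathcal{O}_X=\mathcal{O}_Y$, flat base change yields $\mathbf{L}f^\ast \mathbf{R}g_\ast\simeq \mathbf{R}g'_\ast \mathbf{L}(f')^\ast$, and smoothness of $f'$ ensures $\mathbf{L}(f')^\ast$ sends $D^b_{\operatorname{coh}}(\widetilde{X})$ into $D^b_{\operatorname{coh}}(\widetilde{Y})$. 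Consequently $\mathcal{O}_Y\in \langle \mathbf{R}g'_\ast D^b_{\operatorname{coh}}(\widetilde{Y})\rangle_n$, which is exactly the desired bound.

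The main geometric input is the construction of a resolution of $Y$ from a resolution of $X$; once this is in place, the categorical half is essentially transport of the level relation through $\mathbf{L}f^\ast$. The narrow points to be careful about are verifying that $\widetilde{Y}$ is regular (which follows since $f'$ is smooth and $\widetilde{X}$ is regular) and applying flat base change in this generality, neither of which I anticipate being a serious obstacle.
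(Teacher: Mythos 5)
Your proposal is correct and follows essentially the same route as the paper: pull the resolution $g$ back along $f$, check that the base change $g'$ is again a resolution (using that $f'$ is smooth and $\widetilde{X}$ is regular), then apply $\mathbf{L}f^\ast$ to the relation $\mathcal{O}_X\in\langle\mathbf{R}g_\ast D^b_{\operatorname{coh}}(\widetilde{X})\rangle_n$ and transport it via flat base change, concluding with \Cref{thm:r_value_for_modification_finite}. One tiny remark: $\mathbf{L}f^\ast\mathcal{O}_X\cong\mathcal{O}_Y$ holds for any morphism, not just flat ones, though flatness is of course needed for the base-change isomorphism and to keep $\mathbf{L}(f')^\ast$ bounded.
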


\begin{proof}
    Consider the fibered square
    \begin{displaymath}
        \begin{tikzcd}
            {\widetilde{X}\times_X Y} & Y \\
            {\widetilde{X}} & X\rlap{ .}
            \arrow["{g^\prime}", from=1-1, to=1-2]
            \arrow["{f^\prime}"', from=1-1, to=2-1]
            \arrow["f", from=1-2, to=2-2]
            \arrow["g"', from=2-1, to=2-2]
        \end{tikzcd}
    \end{displaymath}
    As $f$ is flat, it follows $g^\prime$ is proper and birational. Moreover, $\widetilde{X}\times_X Y$ is smooth over $\widetilde{X}$, and so by \cite[Proposition 17.5.8]{Grothendieck:1965} $\widetilde{X}\times_X Y$ is regular. Hence, $g^\prime$ can be used to compute $\mu_{\operatorname{bds}}(Y)$ by \Cref{thm:r_value_for_modification_finite}. 
    Choose $n\geq 1$ such that $\mathcal{O}_X \in \langle \mathbf{R}f_\ast D^b_{\operatorname{coh}}(\widetilde{X}) \rangle_n$. By flat base change, we see that 
    \begin{displaymath}
        \begin{aligned}
            \mathcal{O}_Y
            &\in \langle \mathbf{L}f^\ast \mathbf{R}g_\ast D^b_{\operatorname{coh}}(\widetilde{X})\rangle_n
            \\&\subseteq \langle \mathbf{R} g^\prime_\ast \mathbf{L} (f^\prime)^\ast D^b_{\operatorname{coh}}(\widetilde{X})\rangle_n
            \\&\subseteq \langle \mathbf{R} g^\prime_\ast D^b_{\operatorname{coh}}(\widetilde{X}\times_X Y)\rangle_n.
        \end{aligned}
    \end{displaymath}
    Then the desired inequality follows immediately.
\end{proof}

\begin{corollary}
    \label{cor:bds_bundles}
    Let $X$ be a Noetherian scheme which admits a resolution of singularities $f\colon \widetilde{X}\to X$. Suppose $\mathcal{E}$ is a vector bundle of rank $r+1$ on $X$ ($r\geq 1$). Then $\mu_{\operatorname{bds}}(X) = \mu_{\operatorname{bds}}(\mathbb{P}_X (\mathcal{E}))$.
\end{corollary}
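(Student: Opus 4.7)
The plan is to establish both inequalities separately, using \Cref{prop:subadditive} for one direction and a pushforward argument leveraging $\mathbf{R}\pi_\ast \mathcal{O}_{\mathbb{P}_X(\mathcal{E})} \cong \mathcal{O}_X$ for the other. Throughout, let $\pi\colon \mathbb{P}_X(\mathcal{E}) \to X$ be the structure morphism.

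First I would produce a resolution of $\mathbb{P}_X(\mathcal{E})$ by base change. Setting $\widetilde{\mathbb{P}} \colonequals \mathbb{P}_{\widetilde{X}}(f^\ast \mathcal{E})$, the induced morphism $f'\colon \widetilde{\mathbb{P}} \to \mathbb{P}_X(\mathcal{E})$ is proper birational (base change along the flat morphism $\pi$) and $\widetilde{\mathbb{P}}$ is regular since projective bundles over regular schemes are regular (e.g.\ via \cite[Proposition 17.5.8]{Grothendieck:1965}). Since $\pi$ itself is smooth and $X$ admits a resolution by hypothesis, \Cref{prop:subadditive} immediately gives the inequality $\mu_{\operatorname{bds}}(\mathbb{P}_X(\mathcal{E})) \leq \mu_{\operatorname{bds}}(X)$.

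For the reverse inequality, set $n \colonequals \mu_{\operatorname{bds}}(\mathbb{P}_X(\mathcal{E}))$. \Cref{thm:r_value_for_modification_finite} applied to the resolution $f'$ constructed above yields $\mathcal{O}_{\mathbb{P}_X(\mathcal{E})} \in \langle \mathbf{R}f'_\ast D^b_{\operatorname{coh}}(\widetilde{\mathbb{P}}) \rangle_n$. Let $\widetilde{\pi}\colon \widetilde{\mathbb{P}} \to \widetilde{X}$ denote the bundle projection over the resolution, so $\pi \circ f' = f \circ \widetilde{\pi}$. Applying $\mathbf{R}\pi_\ast$, using the standard computation $\mathbf{R}\pi_\ast \mathcal{O}_{\mathbb{P}_X(\mathcal{E})} \cong \mathcal{O}_X$, and observing that $\mathbf{R}\widetilde{\pi}_\ast$ maps $D^b_{\operatorname{coh}}(\widetilde{\mathbb{P}})$ into $D^b_{\operatorname{coh}}(\widetilde{X})$ (as $\widetilde{\pi}$ is proper), I obtain
\[
    \mathcal{O}_X \in \langle \mathbf{R}f_\ast \mathbf{R}\widetilde{\pi}_\ast D^b_{\operatorname{coh}}(\widetilde{\mathbb{P}}) \rangle_n \subseteq \langle \mathbf{R}f_\ast D^b_{\operatorname{coh}}(\widetilde{X}) \rangle_n.
\]
Another invocation of \Cref{thm:r_value_for_modification_finite}, this time for $f$, then gives $\mu_{\operatorname{bds}}(X) \leq n$.

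I do not anticipate a substantive obstacle; everything reduces to the key facts that $\pi$ is smooth with $\mathbf{R}\pi_\ast \mathcal{O}_{\mathbb{P}_X(\mathcal{E})} \cong \mathcal{O}_X$ and that a resolution of $X$ induces one of $\mathbb{P}_X(\mathcal{E})$ by base change. The only minor points to verify are the compatibility of derived pushforward with the commutative square and the regularity of $\widetilde{\mathbb{P}}$, both of which are standard.
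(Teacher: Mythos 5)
Your argument is correct and takes essentially the same route as the paper: one inequality via \Cref{prop:subadditive}, the other by pushing the generation statement for $\mathcal{O}_{\mathbb{P}_X(\mathcal{E})}$ down along $\pi$ through the fibered square $\mathbb{P}_{\widetilde{X}}(f^\ast\mathcal{E}) \to \widetilde{X}$, then invoking \Cref{thm:r_value_for_modification_finite} twice. The only difference is a mild simplification: the paper proves that $\mathbf{R}\widetilde{\pi}_\ast$ is \emph{essentially surjective} onto $D^b_{\operatorname{coh}}(\widetilde{X})$ (via projection formula and $\mathbf{R}\widetilde{\pi}_\ast\mathcal{O}_{\widetilde{\mathbb{P}}}\cong\mathcal{O}_{\widetilde{X}}$) so as to identify $\mathbf{R}f_\ast D^b_{\operatorname{coh}}(\widetilde{X})$ with $\mathbf{R}(f\circ\widetilde{\pi})_\ast D^b_{\operatorname{coh}}(\widetilde{\mathbb{P}})$ on the nose, while you only use the containment $\mathbf{R}\widetilde{\pi}_\ast D^b_{\operatorname{coh}}(\widetilde{\mathbb{P}})\subseteq D^b_{\operatorname{coh}}(\widetilde{X})$ that follows from properness, which is all the inequality requires.
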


\begin{proof}
    One inequality comes from \Cref{prop:subadditive} as $\pi\colon\mathbb{P}_X (\mathcal{E})\to X$ is smooth. For the other inequality, consider the fibered square
    \begin{displaymath}
        \begin{tikzcd}
            {\mathbb{P}_X (f^\ast\mathcal{E})} & {\widetilde{X}} \\
            {\mathbb{P}_X (\mathcal{E})} & {X}\rlap{ .}
            \arrow["{\pi^\prime}"', from=1-1, to=1-2]
            \arrow["{f^\prime}", from=1-1, to=2-1]
            \arrow["f", from=1-2, to=2-2]
            \arrow["\pi", from=2-1, to=2-2]
        \end{tikzcd}
    \end{displaymath}
    Base change tells us that $\pi^\prime$ is a smooth morphism. Hence, $f^\prime$ is proper and birational. As $\mathcal{O}_X \to \mathbf{R} \pi_\ast \mathcal{O}_{\mathbb{P}_X (\mathcal{E})}$ is an isomorphism (see e.g.\ \cite[\S 3, Exercise 8.4]{Hartshorne:1983}), flat base change tells us $\mathcal{O}_{\widetilde{X}} \xrightarrow{ntrl.} \mathbf{R} \pi^\prime_\ast \mathcal{O}_{\mathbb{P}_X (f^\ast\mathcal{E}) }$ is as well.
    Then $\mathbf{R}\pi_\ast^\prime \colon D^b_{\operatorname{coh}}(\mathbb{P}_X (f^\ast\mathcal{E}) ) \to D^b_{\operatorname{coh}}(\widetilde{X})$ is essentially surjective. Hence, from commutativity of the diagram above, we have that
    \begin{displaymath}
        \operatorname{level}^{\mathbf{R}f_\ast D^b_{\operatorname{coh}}(\widetilde{X})} (\mathcal{O}_X) = \operatorname{level}^{\mathbf{R}(f\circ \pi^\prime)_\ast D^b_{\operatorname{coh}}(\mathbb{P}_X (f^\ast\mathcal{E}) )} (\mathcal{O}_X).
    \end{displaymath}
    But we know that 
    \begin{displaymath}
       \mathcal{O}_X \cong \mathbf{R} \pi_\ast \mathcal{O}_{\mathbb{P}_X (\mathcal{E})}\in \langle \mathbf{R}(\pi\circ f^\prime)_\ast D^b_{\operatorname{coh}}(\mathbb{P}_X (f^\ast\mathcal{E}) ) \rangle_{\mu_{\operatorname{bds}}(\mathbb{P}_X (\mathcal{E}))}.
    \end{displaymath}
    This tells us $\mu_{\operatorname{bds}}(X)\leq \mu_{\operatorname{bds}}(\mathbb{P}_X (\mathcal{E}))$ via \Cref{thm:r_value_for_modification_finite}. So, we are done.
\end{proof}

\begin{remark}\label{rem:generalizations}
    \Cref{cor:bds_bundles} holds more generally for any smooth morphism whose (derived) unit is an isomorphism on structure sheaves. Indeed, the same argument as above will work out.
\end{remark}

\subsection{Local behavior}
\label{sec:measurements_local_behavior}

Next, we focus on the case of $\mu_{\operatorname{bds}}$ for affine schemes and show that this is compatible with looking at stalks. It is not known as to whether being a birational derived splinter is a stalk local property for general schemes. So, any global versions of \Cref{prop:measurement_for_affine_local_to_global} we believe to be more difficult.

\begin{lemma}
    \label{lem:base_change_essentially_surjective}
    Let $f\colon Y \to X$ be a proper surjective morphism to a separated Noetherian scheme and $p\in X$. Consider the fibered square
    \begin{displaymath}
    \begin{tikzcd}
    	{Y\times_X \operatorname{Spec}(\mathcal{O}_{X,p})} & {\operatorname{Spec}(\mathcal{O}_{X,p})} \\
    	Y & {X}\rlap{ .}
    	\arrow["{f^\prime}", from=1-1, to=1-2]
    	\arrow["{s^\prime}", from=1-1, to=2-1]
    	\arrow["s"', from=1-2, to=2-2]
    	\arrow["f", from=2-1, to=2-2]
    \end{tikzcd}
    \end{displaymath}
    Then $\mathbf{L}(s^\prime)^\ast \colon D^b_{\operatorname{coh}}(Y) \to D^b_{\operatorname{coh}}(Y\times_X \operatorname{Spec}(\mathcal{O}_{X,p}))$ is essentially surjective.
\end{lemma}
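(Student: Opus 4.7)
The plan is to factor $s'$ through the preimage of an affine neighborhood of $p$ and reduce to two sub-problems: extending coherent complexes across an open immersion, and approximating coherent sheaves on an affine limit. Choose an affine open $U = \operatorname{Spec}(R_0) \subseteq X$ containing $p$; then $s$ factors through $U$, and base-changing along $f$ factors $s'$ as
\[
Y \times_X \operatorname{Spec}(\mathcal{O}_{X,p}) \xrightarrow{t} f^{-1}(U) \xrightarrow{j} Y,
\]
where $j$ is the open immersion and $t$ is the base change of the affine flat morphism $\operatorname{Spec}(\mathcal{O}_{X,p}) \to U$. Essential surjectivity being closed under composition, it suffices to establish the claim for each of $\mathbf{L}j^\ast = j^\ast$ and $\mathbf{L}t^\ast = t^\ast$.

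For $j^\ast$, the classical extension of coherent sheaves from open subschemes of Noetherian schemes (e.g.\ \cite[\href{https://stacks.math.columbia.edu/tag/01PE}{Tag 01PE}]{StacksProject}) handles the case of a single sheaf. For bounded complexes, I would induct on cohomological amplitude: given $F \in D^b_{\operatorname{coh}}(f^{-1}(U))$ with top cohomology in degree $b$, take the triangle $\tau^{<b}F \to F \to H^b(F)[-b]$, extend the two outer terms to $Y$ (the truncation by induction, the sheaf by Tag 01PE), then splice these via the octahedral axiom after extending the connecting morphism.

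For $t^\ast$, write $\operatorname{Spec}(\mathcal{O}_{X,p}) = \varprojlim_{\sigma \in R_0 \setminus \mathfrak{p}} D(\sigma)$ as a cofiltered limit of principal affine opens of $U$ under affine transitions, where $\mathfrak{p} \subseteq R_0$ corresponds to $p$. Fiber products commute with limits, so $Y \times_X \operatorname{Spec}(\mathcal{O}_{X,p}) = \varprojlim_{\sigma} f^{-1}(D(\sigma))$ is a cofiltered limit of Noetherian schemes with affine transition morphisms. Standard approximation of finitely presented quasi-coherent modules on such limits (in the spirit of the Limits of Schemes chapter of \cite{StacksProject}) then implies any coherent sheaf on $Y \times_X \operatorname{Spec}(\mathcal{O}_{X,p})$ is the pullback of a coherent sheaf on $f^{-1}(D(\sigma))$ for some $\sigma \notin \mathfrak{p}$. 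Since $f^{-1}(D(\sigma)) \hookrightarrow f^{-1}(U)$ is an open immersion of Noetherian schemes, the previous paragraph extends this to a coherent sheaf on $f^{-1}(U)$, which pulls back to the original object by functoriality; bounded complexes are handled by the same truncation induction as before.

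The principal technical obstacle, I expect, is the extension of bounded complexes across the open immersion $j$: a naive term-by-term extension does not preserve the differentials, and the octahedral/truncation argument sidesteps this at the cost of careful bookkeeping with exact triangles and connecting maps. The approximation step for $t^\ast$, by contrast, is essentially formal once one restricts to the cofinal system of principal affine opens, since this makes all the transitions affine.
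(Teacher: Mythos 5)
Your proposal is correct in outline and takes a genuinely different route from the paper. The paper exploits that $s$ (hence $s'$) is affine and flat, reduces essential surjectivity to showing the counit $(s')^\ast s'_\ast E \to E$ is an isomorphism by invoking the ``Observation'' and ``Essentially surjective'' steps of a flat-descent result of Elagin--Lunts--Schn\"{u}rer (Theorem~4.4), reduces to $E$ a coherent sheaf by exactness, and checks the isomorphism affine-locally where $s'$ is a localization. You instead factor $s' = j \circ t$ through the preimage of an affine neighborhood $U$ of $p$: the open immersion $j$ is handled by extension of coherent sheaves on a Noetherian scheme (Tag 01PE) lifted to bounded complexes by a truncation/octahedron induction, and $t$ is handled by exhibiting $Y\times_X\operatorname{Spec}(\mathcal{O}_{X,p})$ as a cofiltered limit of the $f^{-1}(D(\sigma))$ with affine transitions and invoking Noetherian approximation (descent of pseudo-coherent complexes along such limits). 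Both arguments are sound. The paper's is shorter granting the cited descent theorem; yours is more self-contained, relying only on classical extension and limit results, and in fact sidesteps the separatedness hypothesis (which the paper uses only to guarantee $s$ is affine), since your cofinal system of principal opens $D(\sigma)$ has affine transitions automatically. One caveat worth making explicit in your write-up: when extending the connecting morphism $H^b(F)[-b] \to (\tau^{<b}F)[1]$ across $j$, you cannot in general extend a morphism between fixed extensions; you must first shrink the chosen coherent extension of $H^b(F)$ (e.g.\ replace it by $\mathcal{I}^n\widetilde{H^b(F)}$ for a suitable ideal sheaf $\mathcal{I}$ cutting out the complement) so that the morphism extends, then take the cone. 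You flag this as ``careful bookkeeping,'' and indeed that is exactly the missing detail.
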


\begin{proof}
    First, note $s$ is an affine morphism (see e.g.\ \cite[\href{https://stacks.math.columbia.edu/tag/01SG}{Tag 01SG}]{StacksProject}). Moreover, we know that $s$ is flat. Thus, as $s$ is flat and affine, so is $s^\prime$ by base change. Consequently, $s^\prime_\ast$ and $(s^\prime)^\ast$ are exact, which ensures $\mathbf{R}s^\prime_\ast=s^\prime_\ast$ and $\mathbf{L} (s^\prime)^\ast=(s^\prime)^\ast$.
    In order to show essential surjectivity, we first show the counit  
    \begin{equation}\label{eq:counit}
        (s^\prime)^\ast s^\prime_\ast E \to E
    \end{equation} 
    is an isomorphism for all $E\in D^b_{\operatorname{coh}}(Y\times_X \operatorname{Spec}(\mathcal{O}_{X,p}))$.
    To this end, note that this can be checked on cohomology; so we may, by exactness of $(s^\prime)^\ast$ and $s^\prime_\ast$, assume $E$ is a coherent sheaf. 
    Furthermore, showing it is an isomorphism is local on $Y$; so we may reduce to the case both $X$ and $Y$ are affine, i.e.\ pick an affine cover $U_i$ of $X$ and take an affine subcovering $V_{ij}$ of $f^{-1}(U_i)$)
    However, in this case $s^\prime$ is induced by a localization and it follows that \Cref{eq:counit} is an isomorphism.

    To finish, we bootstrap the `Observation' and `Essentially surjective' parts of \cite[Theorem 4.4]{Elagin/Lunts/Schnurer:2020}.
    By the argument of loc.\ cit.\ we can find a subcomplex $K\subseteq s^\prime_\ast E$ which is bounded and coherent such that $(s^\prime)^\ast K = (s^\prime)^\ast s^\prime_\ast E\cong E$.
\end{proof}

\begin{lemma}
    [cf.\ {\cite[Corollary 3.4]{Letz:2021}}]
    \label{lem:generalized_Letz_level}
    Let $X$ be a Noetherian affine scheme. Suppose $\mathcal{S}\subseteq D^b_{\operatorname{coh}}(X)$ and $E\in D^b_{\operatorname{coh}}(X)$. If $E\in \langle \mathcal{S} \rangle$, then $\operatorname{level}^{\mathcal{S}} (E) = {\sup}_{p \in X}\{\operatorname{level}^{\mathcal{S}_p} (E_p) \}$ (here, $\mathcal{S}_p$ is the image of $\mathcal{S}$ under the derived pullback along natural morphism on $\operatorname{Spec}(\mathcal{O}_{X,p}) \to X$).
\end{lemma}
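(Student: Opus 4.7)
The strategy is to prove the two inequalities separately. The inequality $\sup_p \operatorname{level}^{\mathcal{S}_p}(E_p) \leq \operatorname{level}^{\mathcal{S}}(E)$ is immediate: stalkification $(-)_p \colon D^b_{\operatorname{coh}}(X) \to D^b_{\operatorname{coh}}(\operatorname{Spec}(\mathcal{O}_{X,p}))$ is the derived restriction along a flat morphism, hence exact, sends $\mathcal{S}$ into $\mathcal{S}_p$ by definition, and so carries $\langle \mathcal{S} \rangle_n$ into $\langle \mathcal{S}_p \rangle_n$ for every $n\geq 0$.

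For the reverse inequality, set $n := \sup_p \operatorname{level}^{\mathcal{S}_p}(E_p)$, which is finite thanks to the hypothesis $E\in\langle \mathcal{S}\rangle$ and the preceding inequality. I would invoke the ghost lemma of \cite{Avramov/Buchweitz/Iyengar/Miller:2010}: in $D^b_{\operatorname{coh}}(X)$, the condition $\operatorname{level}^{\mathcal{S}}(E) \leq n$ is equivalent to the vanishing of every $n$-fold composition
\[
E = E_0 \xrightarrow{\phi_1} E_1 \xrightarrow{\phi_2} \cdots \xrightarrow{\phi_n} E_n
\]
of $\mathcal{S}$-ghosts, i.e.\ maps $\phi$ with $\operatorname{Hom}_{D(X)}(S[i], \phi) = 0$ for all $S\in \mathcal{S}$ and $i\in\mathbb{Z}$. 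Given such a composition, write $X = \operatorname{Spec}(R)$ and use that any $A\in D^b_{\operatorname{coh}}(X)$ admits a bounded-above resolution by finitely generated free $R$-modules. This yields the identification $\operatorname{Hom}_{D(X)}(A,B)_p \cong \operatorname{Hom}_{D(X_p)}(A_p,B_p)$ for $A,B\in D^b_{\operatorname{coh}}(X)$, from which two things follow: each $(\phi_i)_p$ is an $\mathcal{S}_p$-ghost, and a morphism in $D^b_{\operatorname{coh}}(X)$ vanishes if and only if all its stalks vanish (the Hom group is an $R$-module, and modules vanish stalk-locally). Since $\operatorname{level}^{\mathcal{S}_p}(E_p)\leq n$, the ghost lemma applied at each $p$ forces $(\phi_n \circ \cdots \circ \phi_1)_p = 0$ for every $p\in X$, so the composition itself is zero. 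Another application of the ghost lemma then delivers $\operatorname{level}^{\mathcal{S}}(E)\leq n$, closing the loop.

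The main obstacle is verifying that the ghost lemma holds as a genuine \emph{equivalence} in both $D^b_{\operatorname{coh}}(X)$ and at each stalk; this is essentially what Letz's arguments in the module-theoretic setting provide, and the proof should transfer verbatim using the observations above. A secondary technical point that needs care is the Hom-localization isomorphism, which is ultimately a consequence of the existence of bounded-above resolutions by finitely generated frees over a Noetherian ring.
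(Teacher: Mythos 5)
The first inequality is handled the same way in both arguments, so the substance is in the reverse one, and there your route diverges from the paper's. The paper does \emph{not} go through a ghost lemma. Instead it (i) uses the essential surjectivity of localization together with \cite[Lemma 5.5]{DeDeyn/Lank/ManaliRahul:2025} to pick, for each prime $p$, a single object $S(p)\in\mathcal{S}$ realizing $\operatorname{level}^{\mathcal{S}_p}(E_p)$; (ii) invokes \cite[Proposition 3.5]{Letz:2021}, the openness of the locus $\{q : \operatorname{level}^{S(p)_q}(E_q)\le N_p\}$; (iii) uses quasi-compactness of $X$ to extract a finite subcover and assembles a single object $A=\bigoplus S(p_i)$; and (iv) applies Letz's Corollary 3.4, which is stated for a single object, to $A$. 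The whole point of this maneuver is to reduce the collection $\mathcal{S}$ to a single object so that Letz's result applies.

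Your proposal runs into a genuine gap at exactly the place you flag as ``the main obstacle.'' You need the ghost lemma to be an \emph{equivalence} with respect to the (possibly infinite) collection $\mathcal{S}\subseteq D^b_{\operatorname{coh}}(X)$: vanishing of all $n$-fold $\mathcal{S}$-ghost compositions out of $E$ should force $\operatorname{level}^{\mathcal{S}}(E)\le n$. The forward ghost lemma is fine in any triangulated category, but the converse is not available in this generality. The standard proofs (including the one in ABIM that you cite) construct a universal ghost by taking a coproduct over all maps $S[i]\to E$ with $S\in\mathcal{S}$; that coproduct is an infinite direct sum living in $D(\operatorname{Mod} R)$, not in $D^b_{\operatorname{coh}}(X)$, so even if one shows $E$ is built from $\operatorname{Add}(\mathcal{S})$ in $n$ steps in the big category, descending this to $\langle\mathcal{S}\rangle_n\subseteq D^b_{\operatorname{coh}}(X)$ requires a compactness argument. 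Objects of $D^b_{\operatorname{coh}}(X)$ are not compact in $D_{\operatorname{qc}}(X)$ unless they are perfect, so the ABIM converse ghost lemma (which is stated for compact objects, or for perfect complexes) does not apply. There is a converse ghost lemma in $D^b(\operatorname{mod} R)$ due to Oppermann--Šťovíček, and it is the tool underlying Letz's Corollary 3.4, but it is a statement about a \emph{single} generating object; you would use it for an arbitrary collection, which is not covered. Note also that one cannot simply swap $\mathcal{S}$ for a finite subcollection or a single direct sum $S_0$: while $E\in\langle\mathcal{S}\rangle$ does give $E\in\langle S_0\rangle$ for some such $S_0$, the numbers $\operatorname{level}^{(S_0)_p}(E_p)$ can strictly exceed $\operatorname{level}^{\mathcal{S}_p}(E_p)$, which is exactly the issue the paper's openness-plus-compactness argument is designed to control. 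Your remaining ingredients---the Hom-localization isomorphism for bounded coherent complexes over a Noetherian ring, the stalk-local detection of vanishing, and the observation that stalks of $\mathcal{S}$-ghosts are $\mathcal{S}_p$-ghosts---are all correct; the missing piece is the converse ghost lemma, and it cannot be dismissed as transferring verbatim.
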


\begin{proof}
    It is straightforward to see that $\operatorname{level}^{\mathcal{S}} (E) \geq {\sup}_{p \in X}\{\operatorname{level}^{\mathcal{S}_p} (E_p) \}$. 
    Hence, we only need to check the reverse inequality. If ${\sup}_{p \in X}\{\operatorname{level}^{\mathcal{S}_p} (E_p) \}=\infty$, then there nothing to show, so we can impose it is finite. By \cite[Lemma 5.5]{DeDeyn/Lank/ManaliRahul:2025}, we see for each $p\in X$ there is an $S(p)\in \mathcal{S}$ such that $N_p: = \operatorname{level}^{S(p)_p} (E_p) = \operatorname{level}^{\mathcal{S}_p} (E_p)$. Now, \cite[Proposition 3.5]{Letz:2021} tells us $U_p := \{ q\in X \mid \operatorname{level}^{S(p)_q} (E_q) \leq N_p \}$ is an open subset of $X$. As $p\in U_p$ this gives us an affine open cover of $X$. From $X$ being quasi-compact, we can refine this to a finite subcover, say $U_{p_1},\ldots,U_{p_n}$. Set $A:=\oplus^n_{i=1} S(p_i)$ and $N:={\sup}\{N_1,\dots,N_n\}$. Then \cite[Corollary 3.4]{Letz:2021} implies
    \begin{displaymath}
        \operatorname{level}^A (E) = \underset{p\in X}{\sup} \{ \operatorname{level}^{S_p} (E_p)\} \leq N \leq  \underset{p \in X}{\sup}\{\operatorname{level}^{\mathcal{S}_p} (E_p) \}.
    \end{displaymath}
    However, we know that $\operatorname{level}^{\mathcal{S}} (E)\leq \operatorname{level}^A (E)$, and so the claim follows.
\end{proof}

\begin{proposition}
    \label{prop:measurement_for_affine_local_to_global}
    Let $X=\operatorname{Spec}(R)$ where $R$ is a normal Noetherian integral domain. If $X$ admits a resolution of singularities, then $\mu_{\operatorname{bds}}(X)=\sup_{p\in X} \{ \mu_{\operatorname{bds}}(\mathcal{O}_{X,p}) \}$.
\end{proposition}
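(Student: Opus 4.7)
For the first inequality, the plan is to reduce via \Cref{prop:ds_by_alteration} to proving $\mathcal{O}_{\operatorname{Spec}(\mathcal{O}_{X,p})} \in \langle \mathbf{R}g_\ast D^b_{\operatorname{coh}}(Y')\rangle_{\mu_{\operatorname{bds}}(X)}$ for every blowup $g\colon Y'\to \operatorname{Spec}(\mathcal{O}_{X,p})$ along a nonzero ideal $I\subset \mathcal{O}_{X,p}$. Since $\mathcal{O}_{X,p}$ is a localization of $R$ and $R$ is a domain, I would pick generators of $I$ and lift them to $R$ to obtain a nonzero ideal $J\subset R$ with $J\cdot \mathcal{O}_{X,p}=I$. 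Let $h\colon Z\to X$ be the blowup of $X$ along $J$; by compatibility of blowups with flat base change along $s\colon \operatorname{Spec}(\mathcal{O}_{X,p})\to X$, the induced pullback square has $g$ as its right-hand column, say with top arrow $s'\colon Y'\to Z$. Starting from $\mathcal{O}_X\in \langle \mathbf{R}h_\ast D^b_{\operatorname{coh}}(Z)\rangle_{\mu_{\operatorname{bds}}(X)}$ and applying the exact flat pullback $s^\ast$ together with flat base change $s^\ast \mathbf{R}h_\ast \cong \mathbf{R}g_\ast (s')^\ast$, one lands in $\langle \mathbf{R}g_\ast (s')^\ast D^b_{\operatorname{coh}}(Z)\rangle_{\mu_{\operatorname{bds}}(X)}$. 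Essential surjectivity of $(s')^\ast$, supplied by \Cref{lem:base_change_essentially_surjective} (applicable as $X$ is affine, hence separated, and $h$ is proper surjective), then concludes.

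For the equality assuming resolution, the first inequality already gives $\mu_{\operatorname{bds}}(X)\geq \sup_p \mu_{\operatorname{bds}}(\mathcal{O}_{X,p})$, so I would establish the reverse as follows. Fix a resolution $f\colon \widetilde{X}\to X$. By \Cref{thm:r_value_for_modification_finite}, $\mu_{\operatorname{bds}}(X)=\operatorname{level}^{\mathcal{S}}(\mathcal{O}_X)$ is finite, where $\mathcal{S}:=\mathbf{R}f_\ast D^b_{\operatorname{coh}}(\widetilde{X})\subseteq D^b_{\operatorname{coh}}(X)$ (the containment uses properness of $f$). Apply \Cref{lem:generalized_Letz_level} to get $\mu_{\operatorname{bds}}(X) = \sup_{p\in X} \operatorname{level}^{\mathcal{S}_p}(\mathcal{O}_{X,p})$. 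It then suffices to identify $\operatorname{level}^{\mathcal{S}_p}(\mathcal{O}_{X,p})$ with $\mu_{\operatorname{bds}}(\mathcal{O}_{X,p})$ for each $p$. To do this, base change $f$ along $s$ to $f'\colon \widetilde{X}_p\to \operatorname{Spec}(\mathcal{O}_{X,p})$; since $s$ is flat, $f'$ is proper and birational, and $\widetilde{X}_p$ is regular because localizations of regular Noetherian rings are regular, so $f'$ is again a resolution. Flat base change gives $\mathcal{S}_p \cong \mathbf{R}f'_\ast (s')^\ast D^b_{\operatorname{coh}}(\widetilde{X})$, and \Cref{lem:base_change_essentially_surjective} (valid since $\widetilde{X}$ is separated as $f$ is proper) ensures $(s')^\ast D^b_{\operatorname{coh}}(\widetilde{X})$ is essentially surjective in $D^b_{\operatorname{coh}}(\widetilde{X}_p)$. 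Hence both collections generate the same thick subcategory at each level, and \Cref{thm:r_value_for_modification_finite} applied to $f'$ identifies the level with $\mu_{\operatorname{bds}}(\mathcal{O}_{X,p})$.

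The main obstacle I anticipate is in the second half: verifying that the base-changed morphism $f'$ is a \emph{bona fide} resolution (which requires regularity of $\widetilde{X}_p$ checked affine-locally, plus flatness to preserve properness and birationality) and checking that the essential-surjectivity step does not enlarge the level. The key point is that \Cref{lem:base_change_essentially_surjective} provides genuine essential surjectivity up to isomorphism rather than mere density, which is what allows freely passing between $\mathbf{R}f'_\ast (s')^\ast D^b_{\operatorname{coh}}(\widetilde{X})$ and $\mathbf{R}f'_\ast D^b_{\operatorname{coh}}(\widetilde{X}_p)$ at the level of thick subcategories. Finiteness of $\mu_{\operatorname{bds}}(X)$, again from \Cref{thm:r_value_for_modification_finite}, is essential for \Cref{lem:generalized_Letz_level} to apply nontrivially.
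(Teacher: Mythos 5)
Your proof is correct, and the second half (the equality under a resolution of singularities) follows essentially the same route as the paper: base change the resolution along $s$, invoke \Cref{lem:base_change_essentially_surjective} for essential surjectivity, identify $(\mathbf{R}f_\ast D^b_{\operatorname{coh}}(\widetilde{X}))_p$ with $\mathbf{R}f'_\ast D^b_{\operatorname{coh}}(\widetilde{X}_p)$ by flat base change, and finish with \Cref{thm:r_value_for_modification_finite} and \Cref{lem:generalized_Letz_level}.

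The first half, however, is a genuinely different argument. The paper starts from an \emph{arbitrary} proper birational $f\colon Y\to\operatorname{Spec}(\mathcal{O}_{X,p})$ and spreads it out to $X$ using two lemmas of Lyu (\cite[Lemmas~2.8, 2.11]{Lyu:2022}): Lemma~2.8 produces a proper $W\to X$ restricting to $f$, and Lemma~2.11 cuts out a closed integral $Z\subseteq W$ so that $Z\to X$ is a modification; one then base changes and absorbs the closed immersion into a single cone step. You instead reduce by \Cref{prop:ds_by_alteration} to the case where $f$ is a blowup along a nonzero ideal $I\subseteq\mathcal{O}_{X,p}$, lift generators to a nonzero ideal $J\subseteq R$ with $J\mathcal{O}_{X,p}=I$, blow up $X$ along $J$, and use that blowups commute with flat base change. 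This is cleaner and avoids the spreading-out machinery entirely (at the mild cost of routing through \Cref{prop:ds_by_alteration}, which itself uses the Chow-type lemma of L\"utkebohmert). One small remark: in this half you do not actually need essential surjectivity of $(s')^\ast$; the inclusion $(s')^\ast D^b_{\operatorname{coh}}(Z)\subseteq D^b_{\operatorname{coh}}(Y')$ already gives $\langle\mathbf{R}g_\ast(s')^\ast D^b_{\operatorname{coh}}(Z)\rangle_{N}\subseteq\langle\mathbf{R}g_\ast D^b_{\operatorname{coh}}(Y')\rangle_{N}$, which is all that is required. Invoking \Cref{lem:base_change_essentially_surjective} there is harmless but overkill.
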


\begin{proof}
    By \Cref{cor:localization_bds}, it follows that $\sup_{p\in X} \{ \mu_{\operatorname{bds}}(\mathcal{O}_{X,p}) \}\leq \mu_{\operatorname{bds}}(X)$. Hence, we only need to show that $\mu_{\operatorname{bds}}(X)\leq \sup_{p\in X} \{ \mu_{\operatorname{bds}}(\mathcal{O}_{X,p}) \}$. 
    Let $f\colon \widetilde{X}\to X$ be a resolution of singularities. Consider the fibered square
    \begin{displaymath}
        \begin{tikzcd}
            {\widetilde{X}\times_X \operatorname{Spec}(\mathcal{O}_{X,p})} & {\operatorname{Spec}(\mathcal{O}_{X,p})} \\
            {\widetilde{X}} & {X}\rlap{ .}
            \arrow["{f^\prime}", from=1-1, to=1-2]
            \arrow["{s^\prime}"', from=1-1, to=2-1]
            \arrow["s", from=1-2, to=2-2]
            \arrow["f"', from=2-1, to=2-2]
        \end{tikzcd}
    \end{displaymath}
    Note $f^\prime$ is a resolution of singularities.  
    Moreover, \Cref{lem:base_change_essentially_surjective} tells us $(s^\prime)^\ast \colon D^b_{\operatorname{coh}}(\widetilde{X}\times_X \operatorname{Spec}(\mathcal{O}_{X,p})) \to D^b_{\operatorname{coh}}(\mathcal{O}_{X,p})$ is essentially surjective. Then, using flat base change, we have that $ (\mathbf{R}f_\ast D^b_{\operatorname{coh}}(\widetilde{X}))_p = \mathbf{R}f^\prime_\ast D^b_{\operatorname{coh}}(\widetilde{X}\times_X \operatorname{Spec}(\mathcal{O}_{X,p}))$. From \Cref{thm:r_value_for_modification_finite}, we see that
    \begin{displaymath}
        \begin{aligned}
            \mu_{\operatorname{bds}}(\mathcal{O}_{X,p}) 
            &= \operatorname{level}^{\mathbf{R}f^\prime_\ast D^b_{\operatorname{coh}}(\widetilde{X}\times_X \operatorname{Spec}(\mathcal{O}_{X,p}))} (\mathcal{O}_{X,p}) 
            \\&= \operatorname{level}^{(\mathbf{R}f_\ast D^b_{\operatorname{coh}}(\widetilde{X}))_p} (\mathcal{O}_{X,p}).
        \end{aligned}
    \end{displaymath}
    Therefore, the desired claim follows by \Cref{thm:r_value_for_modification_finite} and \Cref{lem:generalized_Letz_level}.
\end{proof}

\section{Some consequences}
\label{sec:some_consequences}

In the spirit of \cite[Lemma 5.1]{Krah/Vial:2023}, we have the following criterion to descending birational derived splinters.

\begin{proposition}
    [Descending birational derived splinters]
    \label{prop:field_base_change_birational_derived_splinter}
    Let $s\colon Y\to X$ be a flat morphism of Noetherian schemes such that $\mathcal{O}_X \xrightarrow{ntrl.} \mathbf{R}s_\ast \mathcal{O}_Y$ splits. If $Y$ is a birational derived splinter, then so is $X$.
\end{proposition}

\begin{proof}
    Let $f\colon X^\prime \to X$ be a proper birational morphism. Consider the fibered square
    \begin{displaymath}
        \begin{tikzcd}
            {Y^\prime} & {X^\prime} \\
            Y & {X}\rlap{ .}
            \arrow["{s^\prime}", from=1-1, to=1-2]
            \arrow["{f^\prime}"', from=1-1, to=2-1]
            \arrow["f", from=1-2, to=2-2]
            \arrow["s"', from=2-1, to=2-2]
        \end{tikzcd}
    \end{displaymath}
    Note that $f^\prime$ must be proper birational. So, from the hypothesis that $Y$ is a birational derived splinter, we have $\mathcal{O}_Y \xrightarrow{ntrl.} \mathbf{R}f^\prime_\ast \mathcal{O}_{Y^\prime}$ splits. 
    Applying $\mathbf{R}s_\ast$ and using that $\mathcal{O}_X \xrightarrow{ntrl.} \mathbf{R}s_\ast \mathcal{O}_Y$ splits, we get that $\mathcal{O}_X \xrightarrow{ntrl.} \mathbf{R}s_\ast \mathbf{R}f^\prime_\ast \mathcal{O}_{Y^\prime}$ splits. Since $\mathcal{O}_X \xrightarrow{ntrl.} \mathbf{R}f_\ast \mathcal{O}_{X^\prime}$ factors through this latter map, it must also split (see e.g.\ \Cref{lem:splitting_lemma_compositions}). 
\end{proof}

\begin{example}
    [Birational derived splinter need not ascend]
    \label{ex:bad_behavior_2}
    There are two examples. We start with a very easy one. Take any purely inseparable extension of fields $L/k$. Then $L$ is a birational derived splinter, whereas $L \otimes_k L$ is not as it is not reduced.

    Next, we give another. Let $k:= \mathbb{F}_3 (t)$ where $t$ is transcendental. Consider the projective plane curve $C$ over $k$ given by the equation $y^2 z + x^3 - t z^3 = 0$. This is a quasi-compact regular scheme as it is normal. However, if we consider the base change $C^\prime$ of $C$ along the field extension $\mathbb{F}_3 (t^{\frac{1}{3}})$, then $C^\prime$ is a singular projective curve (see e.g.\ \cite[Remark 16]{Kollar:2011}). 
    By \Cref{lem:LV25}, $C$ is a birational derived splinter, whereas $C^\prime$ fails to be such. Indeed, \cite[Theorem B]{Lank/Venkatesh:2025} would imply $C^\prime$ is normal had it been a birational derived splinter, which is absurd as it is singular.
\end{example}

With \Cref{ex:bad_behavior_2} in mind, we work over perfect fields so that regular schemes are smooth over the base (see e.g.\ \cite[\href{https://stacks.math.columbia.edu/tag/0B8X}{Tag 0B8X}]{StacksProject}), and so the latter property is stable under base change.

\begin{proposition}
    \label{prop:independence_field}
    Let $X$ be a scheme of finite type over a perfect field $k$. Assume there is a resolution of singularities $f\colon \widetilde{X}\to X$. Then $\mu_{\operatorname{bds}}(X) \geq \mu_{\operatorname{bds}}(X\times_k \operatorname{Spec}(L))$ for every field extension $L/k$. Additionally, if $L/k$ is finite, then $\mu_{\operatorname{bds}}(X) = \mu_{\operatorname{bds}}(X\times_k \operatorname{Spec}(L))$.
\end{proposition}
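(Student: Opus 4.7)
The plan is to leverage that, by \Cref{thm:r_value_for_modification_finite}, both $\mu_{\operatorname{bds}}(X)$ and $\mu_{\operatorname{bds}}(X_L)$ (where $X_L := X \times_k \operatorname{Spec}(L)$) can be computed via a single resolution, and then to transfer generation across the base change. Set up the pullback square with $\ell\colon \operatorname{Spec}(L)\to\operatorname{Spec}(k)$ and its base changes $\ell^\prime\colon X_L\to X$ and $\ell^{\prime\prime}\colon \widetilde{X}_L\to\widetilde{X}$, where $\widetilde{X}_L:=\widetilde{X}\times_k \operatorname{Spec}(L)$. Because $k$ is perfect and $\widetilde{X}$ is regular, $\widetilde{X}$ is smooth over $k$, hence $\widetilde{X}_L$ is smooth over $L$ and therefore regular. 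As properness and birationality are preserved under flat base change, $f_L\colon \widetilde{X}_L\to X_L$ is a resolution, so \Cref{thm:r_value_for_modification_finite} computes $\mu_{\operatorname{bds}}(X_L)$ via $f_L$.

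For the first inequality, assume $\mu_{\operatorname{bds}}(X)<\infty$ (else nothing to show), and pick $n\geq 1$ with $\mathcal{O}_X\in\langle\mathbf{R}f_\ast D^b_{\operatorname{coh}}(\widetilde{X})\rangle_n$. Applying the (exact) flat pullback $(\ell^\prime)^\ast$ and invoking flat base change $(\ell^\prime)^\ast \mathbf{R}f_\ast \cong \mathbf{R}(f_L)_\ast (\ell^{\prime\prime})^\ast$, together with the fact that $(\ell^{\prime\prime})^\ast$ preserves boundedness and coherence of cohomology ($\ell^{\prime\prime}$ being flat with Noetherian target), yields $\mathcal{O}_{X_L}\in\langle\mathbf{R}(f_L)_\ast D^b_{\operatorname{coh}}(\widetilde{X}_L)\rangle_n$. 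Thus $\mu_{\operatorname{bds}}(X_L)\leq n=\mu_{\operatorname{bds}}(X)$.

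When $L/k$ is finite, $\ell$ (and hence $\ell^\prime$ and $\ell^{\prime\prime}$) is finite; in particular $\ell^\prime_\ast$ and $\ell^{\prime\prime}_\ast$ are exact and preserve coherence. Suppose $\mathcal{O}_{X_L}\in\langle\mathbf{R}(f_L)_\ast D^b_{\operatorname{coh}}(\widetilde{X}_L)\rangle_n$. Pushing forward along $\ell^\prime_\ast$ and using $\ell^\prime_\ast \mathbf{R}(f_L)_\ast \cong \mathbf{R}f_\ast \ell^{\prime\prime}_\ast$ gives
\begin{equation*}
    \ell^\prime_\ast \mathcal{O}_{X_L} \in \langle \mathbf{R}f_\ast \ell^{\prime\prime}_\ast D^b_{\operatorname{coh}}(\widetilde{X}_L)\rangle_n \subseteq \langle \mathbf{R}f_\ast D^b_{\operatorname{coh}}(\widetilde{X})\rangle_n.
\end{equation*}
Since $L$ is free of finite rank $[L:k]$ as a $k$-module, $\ell^\prime_\ast \mathcal{O}_{X_L}\cong \mathcal{O}_X^{\oplus [L:k]}$, so $\mathcal{O}_X$ is a direct summand of an object of $\langle \mathbf{R}f_\ast D^b_{\operatorname{coh}}(\widetilde{X})\rangle_n$, and therefore belongs to it by closure under summands. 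Hence $\mu_{\operatorname{bds}}(X)\leq n=\mu_{\operatorname{bds}}(X_L)$, yielding equality.

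The main technical point is the validity of the flat base change identities applied to generating families in $D^b_{\operatorname{coh}}$, together with the observation that $(\ell^{\prime\prime})^\ast$ does not leave $D^b_{\operatorname{coh}}$ even when $L/k$ has infinite transcendence degree (so $\ell^{\prime\prime}$ is flat but not of finite presentation); this uses that $\widetilde{X}_L$ remains Noetherian since it is of finite type over $L$. Everything else is formal: transport of the level through an adjunction in the flat direction, and a splitting $\mathcal{O}_X\hookrightarrow \ell^\prime_\ast \mathcal{O}_{X_L}$ in the finite direction.
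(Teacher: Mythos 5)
Your proof is correct and takes essentially the same approach as the paper: reduce both sides to a single resolution via \Cref{thm:r_value_for_modification_finite}, transport the generation relation by flat base change to get $\mu_{\operatorname{bds}}(X_L)\le\mu_{\operatorname{bds}}(X)$, and for finite $L/k$ push forward along the finite flat map and use that $\mathcal{O}_X$ is a direct summand of $\ell^\prime_\ast\mathcal{O}_{X_L}$ while $\ell^{\prime\prime}_\ast$ lands in $D^b_{\operatorname{coh}}(\widetilde{X})$. The paper's write-up is slightly more roundabout (it records an ``essentially dense'' observation for $\pi_\ast$ that is not strictly needed), but the substance is identical.
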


\begin{proof}
    We start with a setup/notation. Let $L/k$ be a (resp.\ finite) field extension. Note $\widetilde{X}$ must be smooth over $k$. There is a commutative diagram
    \begin{displaymath}
        \begin{tikzcd}
            && {\llap{$\widetilde{X}_L:=\;$}\widetilde{X}\times_k \operatorname{Spec}(L)} && {\widetilde{X}} \\
            {\llap{$X_L:=\;$}X\times_k \operatorname{Spec}(L)} && X \\
            {\operatorname{Spec}(L)} && {\operatorname{Spec}(k)}
            \arrow["{\pi^\prime}", from=1-3, to=1-5]
            \arrow["{f^\prime}"', bend right= 20, from=1-3, to=2-1, start anchor={[xshift=-2.4em,yshift=-2.5px]}]
            \arrow["{p^\prime}"{description, pos=0.8}, dashed, from=1-3, to=3-1]
            \arrow["f"', from=1-5, to=2-3]
            \arrow["p", bend left=20, from=1-5, to=3-3]
            \arrow["\pi"', crossing over, from=2-1, to=2-3]
            \arrow[from=2-1, to=3-1]
            \arrow["q"', from=2-3, to=3-3]
            \arrow[from=3-1, to=3-3]
        \end{tikzcd}
    \end{displaymath}
    whose square faces are fibered. Note that $\pi$ and $\pi^\prime$ are (resp.\ finite) flat morphisms by base change along $L/k$. Also, $f^\prime$ is a resolution of singularities.

    Suppose $\mathcal{O}_X\in \langle \mathbf{R}f_\ast D^b_{\operatorname{coh}}(\widetilde{X}) \rangle_n$ for some $n\geq 1$. Then
    \begin{displaymath}
        \mathcal{O}_{X_L}= \pi^\ast \mathcal{O}_X\in \langle \pi^\ast \mathbf{R}f_\ast D^b_{\operatorname{coh}}(\widetilde{X}) \rangle_n.
    \end{displaymath}
    By flat base change, we see that 
    \begin{displaymath}
        \begin{aligned}
            \mathcal{O}_{X_L}
            & \in \langle \pi^\ast \mathbf{R}f_\ast D^b_{\operatorname{coh}}(\widetilde{X}) \rangle_n
            \\&\subseteq \langle \mathbf{R}f^\prime_\ast (\pi^\prime)^\ast D^b_{\operatorname{coh}}(X_L) \rangle_n
            \\&\subseteq \langle \mathbf{R}f^\prime_\ast  D^b_{\operatorname{coh}}(\widetilde{X}_L) \rangle_n.
        \end{aligned}
    \end{displaymath}
    Hence, if coupled with \Cref{thm:r_value_for_modification_finite}, one has $\mu_{\operatorname{bds}}(X_L)\leq \mu_{\operatorname{bds}}(X)$.

    Next, we impose $L/k$ be a finite field extension. Let $n\geq 1$ such that
    \begin{displaymath}
        \mathcal{O}_{X_L}\in \langle \mathbf{R}f^\prime_\ast D^b_{\operatorname{coh}}(\widetilde{X}_L) \rangle_n.
    \end{displaymath}
    By flat base change with the inner most square of the diagram above, we have an isomorphism of bounded complexes $\pi_\ast \mathcal{O}_{X_L} \cong \mathcal{O}_X^{\oplus r_0}$. Now, tensoring with any object $E\in D^b_{\operatorname{coh}}(X)$ and using projection formula, we can find a section $E \to \pi_\ast \pi^\ast E$. Clearly, as $\pi$ is flat, $\pi^\ast E\in D^b_{\operatorname{coh}}(X_L)$. Hence, we see that $\pi_\ast \colon D^b_{\operatorname{coh}}(X_L)\to D^b_{\operatorname{coh}}(X)$ is essentially dense. A similar argument shows the same thing for $\pi^\prime_\ast \colon D^b_{\operatorname{coh}}(\widetilde{X}_L) \to D^b_{\operatorname{coh}}(\widetilde{X})$. Then we have
    \begin{displaymath}
        \begin{aligned}
            \mathcal{O}_X &\in \langle \pi_\ast \mathcal{O}_{X_L} \rangle_1 
            \\&\subseteq \langle \pi_\ast \mathbf{R}f^\prime_\ast D^b_{\operatorname{coh}}(\widetilde{X}_L) \rangle_n
            \\&\subseteq \langle \mathbf{R}f_\ast \pi^\prime_\ast  D^b_{\operatorname{coh}}(\widetilde{X}_L) \rangle_n
            \\&\subseteq \langle \mathbf{R}f_\ast D^b_{\operatorname{coh}}(\widetilde{X}) \rangle_n.
        \end{aligned}
    \end{displaymath}
    Hence, $\mu_{\operatorname{bds}}(X) \leq \mu_{\operatorname{bds}}(X_L)$, which completes the proof.
\end{proof}

\begin{theorem}
    \label{thm:base_change_bds_field}
    Let $Y$ be a scheme of finite type over a perfect field $k$ which admits a resolution of singularities. Then the following are equivalent:
    \begin{enumerate}
        \item \label{thm:base_change_bds_field1} $Y$ is a birational derived splinter
        \item \label{thm:base_change_bds_field2} $\operatorname{Spec}(L)\times_k Y$ is a birational derived splinter for some field extension $L/k$
        \item \label{thm:base_change_bds_field3} $\operatorname{Spec}(L)\times_k Y$ is a birational derived splinter for every field extension $L/k$.
    \end{enumerate}
\end{theorem}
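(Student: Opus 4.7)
The theorem falls out immediately from the preceding results. The plan is to use \Cref{lem:thick_one_iff_splitting_naturally} to translate the birational derived splinter property into the numerical statement $\mu_{\operatorname{bds}}(-)=1$, and then chain together the inequalities and descent statements already established.

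First, I would observe the trivial implication $\eqref{thm:base_change_bds_field3}\Rightarrow\eqref{thm:base_change_bds_field2}$. The implication $\eqref{thm:base_change_bds_field2}\Rightarrow\eqref{thm:base_change_bds_field1}$ is exactly the content of \Cref{prop:field_base_change_birational_derived_splinter}, which required no resolution assumption.

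For $\eqref{thm:base_change_bds_field1}\Rightarrow\eqref{thm:base_change_bds_field3}$, fix an arbitrary field extension $L/k$. By \Cref{lem:thick_one_iff_splitting_naturally}, hypothesis $\eqref{thm:base_change_bds_field1}$ says $\mu_{\operatorname{bds}}(Y)=1$. Applying \Cref{prop:independence_field} (whose hypotheses are met since $k$ is perfect and $Y$ admits a resolution of singularities) yields
\begin{displaymath}
    \mu_{\operatorname{bds}}(Y\times_k \operatorname{Spec}(L)) \leq \mu_{\operatorname{bds}}(Y) = 1.
\end{displaymath}
Since $\mu_{\operatorname{bds}}(-)\geq 1$ whenever the scheme is nonempty, equality holds and \Cref{lem:thick_one_iff_splitting_naturally} tells us $Y\times_k \operatorname{Spec}(L)$ is a birational derived splinter.

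There is essentially no obstacle here; the real work has been done in \Cref{prop:independence_field} (ascent of $\mu_{\operatorname{bds}}$ along field extensions over a perfect base) and \Cref{prop:field_base_change_birational_derived_splinter} (descent of the splitting under flat affine base change via the splitting of $\mathcal{O}_Y\to \ell^\prime_\ast \mathcal{O}_{Y_L}$). The only mild subtlety worth flagging is that the perfectness of $k$ is genuinely needed for $\eqref{thm:base_change_bds_field1}\Rightarrow\eqref{thm:base_change_bds_field3}$, as illustrated by \Cref{ex:bad_behavior_2}: without it, a resolution $\widetilde{Y}\to Y$ need not remain regular after base change, so the inequality from \Cref{prop:independence_field} can fail.
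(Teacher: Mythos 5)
Your proof is correct and follows essentially the same route as the paper, which simply cites \Cref{prop:field_base_change_birational_derived_splinter} and \Cref{prop:independence_field}; you have merely spelled out the chaining of implications and the translation via $\mu_{\operatorname{bds}}=1$. The remark about perfectness and \Cref{ex:bad_behavior_2} is a sensible sanity check but not part of the argument itself.
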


\begin{proof}
    This follows from \Cref{prop:field_base_change_birational_derived_splinter} and \Cref{prop:independence_field}
\end{proof}

\begin{proposition}
    \label{prop:submultiplicativity}
    Let $X$ and $Y$ be schemes which are proper over a field $k$. Then
    \begin{displaymath}
        \max\{\mu_{\operatorname{bds}}(X),\mu_{\operatorname{bds}}(Y)\}\leq \mu_{\operatorname{bds}}(X\times_k Y).
    \end{displaymath} 
\end{proposition}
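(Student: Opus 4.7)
By symmetry, it suffices to show $\mu_{\operatorname{bds}}(X) \leq \mu_{\operatorname{bds}}(X \times_k Y)$. The plan is: fix an arbitrary proper birational morphism $f \colon \widetilde{X} \to X$, lift it along the projection $p \colon X \times_k Y \to X$ to a proper birational morphism $F$ on the products, invoke the definition of $\mu_{\operatorname{bds}}(X \times_k Y)$ there, and then push everything back down along $p$ to obtain a level bound for $\mathcal{O}_X$ with respect to $\mathbf{R} f_\ast D^b_{\operatorname{coh}}(\widetilde{X})$.

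Concretely, consider the fibered square
\begin{displaymath}
\begin{tikzcd}
\widetilde{X} \times_k Y \ar[r, "p'"] \ar[d, "F"'] & \widetilde{X} \ar[d, "f"] \\
X \times_k Y \ar[r, "p"] & X,
\end{tikzcd}
\end{displaymath}
so $F = f \times_k \operatorname{id}_Y$ is proper (base change of $f$) and birational by the flat base change property of birationality recalled in \Cref{sec:prelim_singularities_morphisms}. Both $p$ and $p'$ are proper as base changes of $Y \to \operatorname{Spec}(k)$. Setting $N := \mu_{\operatorname{bds}}(X \times_k Y)$, which I may assume finite, the definition gives $\mathcal{O}_{X \times_k Y} \in \langle \mathbf{R} F_\ast D^b_{\operatorname{coh}}(\widetilde{X} \times_k Y) \rangle_N$. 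Applying $\mathbf{R} p_\ast$ and using $p \circ F = f \circ p'$ then yields
\begin{displaymath}
\mathbf{R} p_\ast \mathcal{O}_{X \times_k Y} \in \langle \mathbf{R} f_\ast \mathbf{R} p'_\ast D^b_{\operatorname{coh}}(\widetilde{X} \times_k Y) \rangle_N \subseteq \langle \mathbf{R} f_\ast D^b_{\operatorname{coh}}(\widetilde{X}) \rangle_N.
\end{displaymath}

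The crux is to identify $\mathcal{O}_X$ as a direct summand of $\mathbf{R} p_\ast \mathcal{O}_{X \times_k Y}$ in $D^b_{\operatorname{coh}}(X)$. Flat base change along $X \to \operatorname{Spec}(k)$ identifies $\mathbf{R} p_\ast \mathcal{O}_{X \times_k Y}$ with $\mathbf{R}\Gamma(Y, \mathcal{O}_Y) \otimes_k \mathcal{O}_X$. Since $Y$ is proper over the field $k$, the complex $\mathbf{R}\Gamma(Y, \mathcal{O}_Y)$ has bounded, finite-dimensional cohomology over $k$, and so it is formal:
\begin{displaymath}
\mathbf{R} p_\ast \mathcal{O}_{X \times_k Y} \simeq \bigoplus_{i} \mathcal{O}_X^{\oplus h^i}[-i], \qquad h^i := \dim_k H^i(Y, \mathcal{O}_Y).
\end{displaymath}
The natural map $\mathcal{O}_X \xrightarrow{ntrl.} \mathbf{R} p_\ast \mathcal{O}_{X \times_k Y}$ corresponds on $H^0$ to the structural map $k \to H^0(Y, \mathcal{O}_Y)$, which splits $k$-linearly. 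Transporting this splitting through the containment above gives $\mathcal{O}_X \in \langle \mathbf{R} f_\ast D^b_{\operatorname{coh}}(\widetilde{X}) \rangle_N$, and taking the supremum over $f$ produces $\mu_{\operatorname{bds}}(X) \leq N$, as desired.

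The most delicate step is the splitting: it relies crucially on working over a field, both for the formality of $\mathbf{R}\Gamma(Y, \mathcal{O}_Y)$ and for the splitting of $k \to H^0(Y, \mathcal{O}_Y)$ as $k$-vector spaces; neither would be automatic over a more general base. The remaining small thing to verify is that $F$ really qualifies as a proper birational morphism in the Stacks-project sense from \Cref{sec:prelim_singularities_morphisms}, which is immediate from flat base change.
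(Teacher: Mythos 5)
Your proof is correct and takes essentially the same approach as the paper: base change a proper birational $f\colon\widetilde{X}\to X$ along the projection $p\colon X\times_k Y\to X$, push the level bound for $\mathcal{O}_{X\times_k Y}$ down through $\mathbf{R}p_\ast$ using $p\circ F=f\circ p'$, and split off $\mathcal{O}_X$ as a direct summand of $\mathbf{R}p_\ast\mathcal{O}_{X\times_k Y}\simeq\mathbf{R}\Gamma(Y,\mathcal{O}_Y)\otimes_k\mathcal{O}_X$ via formality over $k$. The only cosmetic difference is that the paper packages the splitting step through its \Cref{lem:thick_one_iff_splitting_naturally}, whereas you spell out the formality argument directly.
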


\begin{proof}
    We only show the inequality for $X$ and $X\times_k Y$ as the same argument applies for $Y$. Let $f\colon X^\prime \to X$ be a proper birational morphism. Consider the commutative diagram
    \begin{displaymath}
        \begin{tikzcd}
            {X^\prime\times_k Y} & {X\times_k Y} & Y \\
            {X^\prime} & X & {\operatorname{Spec}(k)}
            \arrow["{f^\prime}", from=1-1, to=1-2]
            \arrow["{\pi^\prime_1}"', from=1-1, to=2-1]
            \arrow["{\pi_2}"', from=1-2, to=1-3]
            \arrow["{\pi_1}", from=1-2, to=2-2]
            \arrow[from=1-3, to=2-3]
            \arrow["f", from=2-1, to=2-2]
            \arrow[from=2-2, to=2-3]
        \end{tikzcd}
    \end{displaymath}
    whose squares are fibered. By flat base change, $\mathbf{R}(\pi_1)_\ast \mathcal{O}_{X\times_k Y}$ and $\mathbf{R}(\pi^\prime_1)_\ast \mathcal{O}_{X^\prime \times_k Y}$ are isomorphic to bounded complexes of the form $\oplus_{t\in \mathbb{Z}} \mathcal{O}_X^{\oplus s_t}[t]$ and $\oplus_{t\in \mathbb{Z}} \mathcal{O}_{X^\prime}^{\oplus s_t}[t]$ respectively.
    Then \Cref{lem:thick_one_iff_splitting_naturally} implies $\langle \mathbf{R}(\pi_1)_\ast D^b_{\operatorname{coh}}(X\times_k Y) \rangle_1 = D^b_{\operatorname{coh}}(X)$ and $\langle \mathbf{R}(\pi_1^\prime)_\ast D^b_{\operatorname{coh}}(X^\prime\times_k Y) \rangle_1 = D^b_{\operatorname{coh}}(X^\prime)$. Note $f^\prime$ is proper and birational. Choose $n\leq \mu_{\operatorname{bds}}(X\times_k Y)$ such that $\mathcal{O}_{X^\prime \times_k Y}\in \langle \mathbf{R}f^\prime_\ast D^b_{\operatorname{coh}}(X\times_k Y) \rangle_n$. Then we have a string of inclusions
    \begin{displaymath}
        \begin{aligned}
            \mathcal{O}_X 
            &\in \langle \mathbf{R}(\pi_1)_\ast \mathcal{O}_{X\times_k Y}\rangle_1 
            \\&\subseteq \langle \mathbf{R}(\pi_1 \circ f^\prime)_\ast D^b_{\operatorname{coh}}(X^\prime \times_k Y) \rangle_n
            \\&\subseteq \langle \mathbf{R}(f\circ \pi_1^\prime)_\ast D^b_{\operatorname{coh}}(X^\prime \times_k Y) \rangle_n
            \\&\subseteq \langle \mathbf{R}f_\ast D^b_{\operatorname{coh}}(X) \rangle_n.
        \end{aligned}
    \end{displaymath}
    showing the desired inequality.
\end{proof}

\begin{theorem}
    \label{thm:submultiplicativity}
    Let $X$ and $Y$ be schemes which are proper over a perfect field $k$. Assume both admit resolutions of singularities. Then there are inequalities:
    \begin{displaymath}
        \max\{\mu_{\operatorname{bds}}(X),\mu_{\operatorname{bds}}(Y)\}\leq \mu_{\operatorname{bds}}(X\times_k Y) \leq \mu_{\operatorname{bds}}(X) \mu_{\operatorname{bds}}(Y).
    \end{displaymath}
\end{theorem}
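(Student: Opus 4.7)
The lower bound is exactly \Cref{prop:submultiplicativity}, so the plan is to establish only the upper bound $\mu_{\operatorname{bds}}(X \times_k Y) \leq \mu_{\operatorname{bds}}(X)\mu_{\operatorname{bds}}(Y)$. Set $m := \mu_{\operatorname{bds}}(X)$ and $n := \mu_{\operatorname{bds}}(Y)$, and fix resolutions of singularities $f\colon \widetilde{X} \to X$, $g\colon \widetilde{Y} \to Y$. Since $k$ is perfect and $\widetilde{X}, \widetilde{Y}$ are regular and of finite type over $k$, both are smooth over $k$ (Stacks Tag 0B8X), so the product $\widetilde{X} \times_k \widetilde{Y}$ is smooth over $k$ and hence regular. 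I claim $h := f \times g$ is a resolution of $X \times_k Y$: it is clearly proper, and birationality follows by factoring $h = (f \times 1_Y) \circ (1_{\widetilde{X}} \times g)$ and invoking the fact (recorded in \Cref{sec:prelim_singularities_morphisms}) that birationality is preserved under flat base change. Consequently \Cref{thm:r_value_for_modification_finite} reduces the task to showing $\mathcal{O}_{X \times_k Y} \in \langle \mathbf{R}h_\ast D^b_{\operatorname{coh}}(\widetilde{X} \times_k \widetilde{Y}) \rangle_{mn}$.

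Mimicking the flat base change moves in the proof of \Cref{prop:independence_field}, I would first pull back the containment $\mathcal{O}_X \in \langle \mathbf{R}f_\ast D^b_{\operatorname{coh}}(\widetilde{X}) \rangle_m$ along the flat first projection $X \times_k Y \to X$. Flat base change on the corresponding fibered square gives
\[
    \mathcal{O}_{X \times_k Y} \in \langle \mathbf{R}(f \times 1_Y)_\ast \pi^\ast D^b_{\operatorname{coh}}(\widetilde{X}) \rangle_m,
\]
where $\pi \colon \widetilde{X} \times_k Y \to \widetilde{X}$ is the projection. The same argument, applied to the flat projection $\widetilde{X} \times_k Y \to Y$ and starting from $\mathcal{O}_Y \in \langle \mathbf{R}g_\ast D^b_{\operatorname{coh}}(\widetilde{Y}) \rangle_n$, produces
\[
    \mathcal{O}_{\widetilde{X} \times_k Y} \in \langle \mathbf{R}(1_{\widetilde{X}} \times g)_\ast D^b_{\operatorname{coh}}(\widetilde{X} \times_k \widetilde{Y}) \rangle_n.
\]

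The next step is to upgrade this second containment from $\mathcal{O}_{\widetilde{X} \times_k Y}$ to arbitrary pullbacks $\pi^\ast A$ for $A \in D^b_{\operatorname{coh}}(\widetilde{X})$. Since $\widetilde{X}$ is regular, $A$ is perfect, so $\pi^\ast A$ is perfect on $\widetilde{X} \times_k Y$. Tensoring the displayed containment with $\pi^\ast A$ and invoking the projection formula (valid because $\pi^\ast A$ is perfect) lands $\pi^\ast A$ inside $\langle \mathbf{R}(1_{\widetilde{X}} \times g)_\ast D^b_{\operatorname{coh}}(\widetilde{X} \times_k \widetilde{Y}) \rangle_n$. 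Pushing forward by $\mathbf{R}(f \times 1_Y)_\ast$ then places $\mathbf{R}(f \times 1_Y)_\ast \pi^\ast A$ inside $\langle \mathbf{R}h_\ast D^b_{\operatorname{coh}}(\widetilde{X} \times_k \widetilde{Y}) \rangle_n$. Substituting into the first displayed containment and applying the standard nesting identity $\langle \langle \mathcal{S} \rangle_n \rangle_m \subseteq \langle \mathcal{S} \rangle_{mn}$ yields $\mathcal{O}_{X \times_k Y} \in \langle \mathbf{R}h_\ast D^b_{\operatorname{coh}}(\widetilde{X} \times_k \widetilde{Y}) \rangle_{mn}$, which is what we needed.

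The main technical obstacle will be securing the projection formula step: one needs $\pi^\ast A$ to be perfect even though the intermediate scheme $\widetilde{X} \times_k Y$ is in general not regular. This is exactly why the regularity of $\widetilde{X}$ itself (so that $D^b_{\operatorname{coh}}(\widetilde{X}) = \operatorname{Perf}(\widetilde{X})$) is essential, and cannot be replaced by regularity of $\widetilde{X} \times_k Y$. A secondary point worth verifying is the birationality of $h$ when $X \times_k Y$ fails to be integral; fortunately this reduces at once to the flat base change behavior of birationality invoked in \Cref{sec:prelim_singularities_morphisms}, and needs no separate argument.
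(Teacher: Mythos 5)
Your proof is correct and follows essentially the same route as the paper's: base change $\mathcal{O}_X\in\langle\mathbf{R}f_\ast D^b_{\operatorname{coh}}(\widetilde X)\rangle_m$ along the flat projection to $X\times_k Y$, factor through the intermediate $\widetilde X\times_k Y$, and use that the pulled-back complexes are perfect (since $\widetilde X$ is regular) to invoke the projection formula before composing the two levels via nesting and applying \Cref{thm:r_value_for_modification_finite}. The only difference is presentational: where you inline the flat base change from $Y$ to $\widetilde X\times_k Y$ and the tensor-with-a-perfect-complex step, the paper packages these as invocations of \Cref{prop:subadditive} and \Cref{lem:characterize_by_perfects}, respectively, but the underlying arguments coincide.
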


\begin{proof} 
    Let $f\colon \widetilde{X}\to X$ and $g\colon \widetilde{Y}\to Y$ be resolutions of singularities. There is a commutative diagram
    \begin{displaymath}
        \begin{tikzcd}
            {\widetilde{X}\times_k \widetilde{Y}} & {X\times_k \widetilde{Y}} & {\widetilde{Y}} \\
            {\widetilde{X}\times_k Y} & {X\times_k Y} & Y \\
            {\widetilde{X}} & X & {\operatorname{Spec}(k)}
            \arrow["{f^{\prime \prime}}"', from=1-1, to=1-2]
            \arrow["{g^{\prime \prime}}", from=1-1, to=2-1]
            \arrow[from=1-2, to=1-3]
            \arrow["{g^\prime}", from=1-2, to=2-2]
            \arrow["g", from=1-3, to=2-3]
            \arrow["{f^\prime}"', from=2-1, to=2-2]
            \arrow["h", from=2-1, to=3-1]
            \arrow["{\pi_2}", from=2-2, to=2-3]
            \arrow["{\pi_1}"', from=2-2, to=3-2]
            \arrow[from=2-3, to=3-3]
            \arrow["f"', from=3-1, to=3-2]
            \arrow[from=3-2, to=3-3]
        \end{tikzcd}
    \end{displaymath}
    whose squares are all fibered. Note that $\pi_1$, $\pi_1 \circ g^\prime$, $\pi_2$ and $\pi_2 \circ f^\prime$ are flat morphisms by base change. Then  $f^\prime$, $f^{\prime \prime}$, $g^\prime$, and $g^{\prime \prime}$ are proper birational morphisms. 
    So, $f^\prime \circ g^{\prime \prime}$ is a proper birational morphism to $X\times_k Y$ from a smooth scheme.

    Now, we show the desired upper bound for $\mu_{\operatorname{bds}}(X\times_k Y)$. Let $n_1,n_2\geq 1$ such that $\mathcal{O}_X \in \langle \mathbf{R}f_\ast D^b_{\operatorname{coh}}(\widetilde{X}) \rangle_{n_1}$ and $\mathcal{O}_Y \in \langle \mathbf{R}g_\ast D^b_{\operatorname{coh}}(\widetilde{Y}) \rangle_{n_2}$. As $\widetilde{X}$ is smooth, and hence regular, we have that $D^b_{\operatorname{coh}}(\widetilde{X}) = \operatorname{Perf}(\widetilde{X})$. By flat base change, we see that 
    \begin{displaymath}
        \begin{aligned}
            \mathcal{O}_{X\times_k Y} 
            &\in \langle \pi^\ast_1 \mathbf{R}f_\ast D^b_{\operatorname{coh}}(\widetilde{X}) \rangle_{n_1}
            \\&\subseteq \langle \mathbf{R}f^\prime_\ast  \mathbf{L}h^\ast D^b_{\operatorname{coh}}(\widetilde{X}) \rangle_{n_1}
            \\&\subseteq \langle \mathbf{R}f^\prime_\ast \operatorname{Perf}(\widetilde{X}\times_k Y) \rangle_{n_1}.
        \end{aligned}
    \end{displaymath}
    From \Cref{prop:subadditive}, we know that $\mu_{\operatorname{bds}}(\widetilde{X}\times_k Y) \leq \mu_{\operatorname{bds}}(Y)$. Hence,
    \begin{displaymath}
        \mathcal{O}_{\widetilde{X}\times_k Y} \in \langle \mathbf{R}g_\ast^{\prime \prime} D^b_{\operatorname{coh}}(\widetilde{X}\times_k \widetilde{Y}) \rangle_{n_2},
    \end{displaymath}
    and so, by \Cref{lem:characterize_by_perfects},
    \begin{displaymath}
        \operatorname{Perf}(\widetilde{X}\times_k Y) \subseteq \langle \mathbf{R}g_\ast^{\prime \prime} D^b_{\operatorname{coh}}(\widetilde{X}\times_k \widetilde{Y}) \rangle_{n_2}.
    \end{displaymath}
    Combining this with the previous inclusion, we get
    \begin{displaymath}
        \begin{aligned}
            \mathcal{O}_{X\times_k Y}
            &\in \langle \mathbf{R}f^\prime_\ast \operatorname{Perf}(\widetilde{X}\times_k Y) \rangle_{n_1}
            \\&\subseteq \langle \mathbf{R}(f^\prime \circ g^{\prime \prime})_\ast D^b_{\operatorname{coh}}(\widetilde{X}\times_k \widetilde{Y}) \rangle_{n_1 n_2}.
        \end{aligned}
    \end{displaymath}
    From \Cref{thm:r_value_for_modification_finite}, we obtain the desired upper bound for $\mu_{\operatorname{bds}}(X\times_k Y)$. As the desired lower bound follows from \Cref{prop:submultiplicativity}, we are done.
\end{proof}

\begin{corollary}
    \label{cor:fibered_product_bds}
    Let $Y_1$ and $Y_2$ be schemes which are proper over a perfect field $k$. Assume both admit proper birational morphisms from regular schemes. Then $Y_1$ and $Y_2$ both are birational derived splinters if, and only if, $Y_1\times_k Y_2$ is such.
\end{corollary}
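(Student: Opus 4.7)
The plan is to deduce the corollary directly from \Cref{thm:submultiplicativity}, which gives the two-sided bound
\begin{displaymath}
    \max\{\mu_{\operatorname{bds}}(Y_1),\mu_{\operatorname{bds}}(Y_2)\}\leq \mu_{\operatorname{bds}}(Y_1\times_k Y_2) \leq \mu_{\operatorname{bds}}(Y_1) \mu_{\operatorname{bds}}(Y_2),
\end{displaymath}
together with the characterization (\Cref{lem:thick_one_iff_splitting_naturally}) that a Noetherian scheme $Z$ is a birational derived splinter precisely when $\mu_{\operatorname{bds}}(Z)=1$. The existence of proper birational morphisms from regular schemes ensures \Cref{thm:submultiplicativity} applies (both $Y_i$ admit resolutions in the sense used there, since a proper birational morphism from a regular scheme is exactly what is meant by a resolution of singularities in \Cref{sec:prelim_singularities_morphisms}).

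For the forward direction, if both $Y_1$ and $Y_2$ are birational derived splinters, then $\mu_{\operatorname{bds}}(Y_1) = \mu_{\operatorname{bds}}(Y_2) = 1$, and the upper bound above forces $\mu_{\operatorname{bds}}(Y_1\times_k Y_2) \leq 1$. Since $\mu_{\operatorname{bds}}$ is always at least $1$ (the structure sheaf lies in $\langle \mathbf{R}(\operatorname{id})_\ast D^b_{\operatorname{coh}}\rangle_1$ via the trivial modification), equality holds, and so $Y_1\times_k Y_2$ is a birational derived splinter. For the reverse direction, assuming $\mu_{\operatorname{bds}}(Y_1\times_k Y_2) = 1$, the lower bound gives $\max\{\mu_{\operatorname{bds}}(Y_1),\mu_{\operatorname{bds}}(Y_2)\}\leq 1$, hence both must equal $1$, and therefore both $Y_1$ and $Y_2$ are birational derived splinters.

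There is no real obstacle: all the work has already been done in \Cref{thm:submultiplicativity}. The only thing to notice is that the upper bound requires the perfect field hypothesis (used there via base change of regularity), while the lower bound, coming from \Cref{prop:submultiplicativity}, does not; for this corollary we need both directions, so the perfect field assumption is essential. The proof can therefore be stated in a single sentence invoking \Cref{thm:submultiplicativity} and \Cref{lem:thick_one_iff_splitting_naturally}.
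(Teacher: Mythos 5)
Your proof is correct and matches the paper's approach exactly: the paper simply says the corollary is immediate from \Cref{thm:submultiplicativity}, and you have spelled out the same deduction via the characterization $\mu_{\operatorname{bds}}=1$. No further comment is needed.
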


\begin{proof}
    This is immediate from \Cref{thm:submultiplicativity}.
\end{proof}

\bibliographystyle{alpha}
\bibliography{mainbib}

\end{document}